\begin{document}

\newcommand{\ca}{\mathcal{C}_{AL}}

\title{The conjugacy stability problem \\ for parabolic subgroups in Artin groups}
\date{\today }
\author{Mar\'{i}a Cumplido}


\maketitle
\theoremstyle{plain}
\newtheorem{theorem}{Theorem}

\newaliascnt{lemma}{theorem}
\newtheorem{lemma}[lemma]{Lemma}
\aliascntresetthe{lemma}
\providecommand*{\lemmaautorefname}{Lemma}

\newaliascnt{proposition}{theorem}
\newtheorem{proposition}[proposition]{Proposition}
\aliascntresetthe{proposition}
\providecommand*{\propositionautorefname}{Proposition}

\newaliascnt{corollary}{theorem}
\newtheorem{corollary}[corollary]{Corollary}
\aliascntresetthe{corollary}
\providecommand*{\corollaryautorefname}{Corollary}

\newaliascnt{conjecture}{theorem}
\newtheorem{conjecture}[conjecture]{Conjecture}
\aliascntresetthe{conjecture}
\providecommand*{\conjectureautorefname}{Conjecture}

\theoremstyle{remark}

\newaliascnt{claim}{theorem}
\newaliascnt{remark}{theorem}
\newtheorem{claim}[claim]{Claim}
\newtheorem{remark}[remark]{Remark}
\newaliascnt{notation}{theorem}
\newtheorem{notation}[notation]{Notation}
\aliascntresetthe{notation}
\providecommand*{\notationautorefname}{Notation}

\aliascntresetthe{claim}
\providecommand*{\claimautorefname}{Claim}

\aliascntresetthe{remark}
\providecommand*{\remarkautorefname}{Remark}

\newtheorem*{claim*}{Claim}
\theoremstyle{definition}

\newaliascnt{definition}{theorem}
\newtheorem{definition}[definition]{Definition}
\aliascntresetthe{definition}
\providecommand*{\definitionautorefname}{Definition}

\newaliascnt{example}{theorem}
\newtheorem{example}[example]{Example}
\aliascntresetthe{example}
\providecommand*{\exampleautorefname}{Example}

\def\autorefspace{\hspace*{-0.5pt}}
\def\sectionautorefname{Section\autorefspace}
\def\subsectionautorefname{Section\autorefspace}
\def\subsubsectionautorefname{Section\autorefspace}
\def\figureautorefname{Figure\autorefspace}
\def\subfigureautorefname{Figure\autorefspace}
\def\tableautorefname{Table\autorefspace}
\def\equationautorefname{Equation\autorefspace}
\def\Itemautorefname{item\autorefspace}
\def\Hfootnoteautorefname{footnote\autorefspace}
\def\AMSautorefname{Equation\autorefspace}

\newcommand{\co}{\simeq_c}
\newcommand{\w}{\widetilde}
\newcommand{\po}{\preccurlyeq}
\newcommand{\dist}{\mathrm{d}}

\def\Z{\mathbb Z} 
\def\Ker{{\rm Ker}} \def\R{\mathbb R} \def\GL{{\rm GL}}
\def\HH{\mathcal H} \def\C{\mathbb C} \def\P{\mathbb P}
\def\SSS{\mathfrak S} \def\BB{\mathcal B} 
\def\supp{{\rm supp}} \def\Id{{\rm Id}} \def\Im{{\rm Im}}
\def\MM{\mathcal M} \def\S{\mathbb S}
\newcommand{\bigveer}{\bigvee^\Lsh}
\newcommand{\wedger}{\wedge^\Lsh}
\newcommand{\veer}{\vee^\Lsh}
\def\diam{{\rm diam}}

\newcommand{\myref}[2]{\hyperref[#1]{#2~\ref*{#1}}}

\begin{abstract}

Given an Artin group $A$ and a parabolic subgroup $P$, we study if every two elements of $P$ that are conjugate in $A$, are also conjugate in $P$. We provide an algorithm to solve this decision problem if $A$ satisfies three properties that are conjectured to be true for every Artin group. This allows to solve the problem for new families of Artin groups. We also partially solve the problem if $A$ has $FC$-type, and we totally solve it if $A$ is isomorphic to a free product of Artin groups of spherical type. In particular, we show that in this latter case, every element of $A$ is contained in a unique minimal (by inclusion) parabolic subgroup. 

\medskip

{\footnotesize
\noindent \emph{2020 Mathematics Subject Classification.} 20F36, 20F10.

\noindent \emph{Key words.} Artin groups; conjugacy stability; conjugacy classes; algorithmic in group theory.}

\end{abstract}

\section{Introduction}

Artin (or Artin--Tits) groups were defined by Jacques Tits in the 60's. They are groups presented by a finite set of generators $S$ and at most one relation of the form $stst\cdots = tsts\cdots$, for every pair $s,t\in S$, with the same number of letters $m_{s,t}$ at each side of the equality. If there is no relation associated to a pair of generators $s,t\in S$, then we denote $m_{s,t}=\infty$. Then, the presentation of an Artin group is as follows:
 
$$A_S=\langle S \,|\, \underbrace{sts\dots}_{m_{s,t} \text{ elements}}=\underbrace{tst\dots}_{m_{s,t} \text{ elements}} \forall s,t\in S,\, s\neq t,\, m_{s,t}\neq \infty \rangle.$$

These groups are algebraic generalisations of the well-known braid groups on $n+1$ strands \citep{Artin}:

\begin{equation*}
A_n=\left\langle \sigma_1,\dots, \sigma_{n}\, \begin{array}{|lr}
                                              \sigma_i\sigma_j=\sigma_j\sigma_i, & |i-j|>1 \\
                                                \sigma_i\sigma_{j}\sigma_i= \sigma_{j}\sigma_{i}\sigma_{j} , & |i-j|=1
                                             \end{array}
 \right\rangle .\end{equation*}
A fundamental tool for the study of braid groups is the action by isometries of $A_n$  on the curve complex of the $n+1$-punctured disk $\mathcal{D}_{n+1}$. The curve complex has as vertices (isotopy classes of non-degenerated) simple closed curves in $\mathcal{D}_{n+1}$. For Artin groups, the analogous of simple closed curves are irreducible parabolic subgroups. In fact, there is a bijection between the proper irreducible parabolic subgroups of $A_n$ and the simple closed curves in $\mathcal{D}_{n+1}$ (see an explanation in \citealp[Section~2]{CGGW}).

\medskip

A \emph{standard parabolic subgroup} $A_X$ is a subgroup generated by a subset of generators $X\subseteq S$. The conjugate of any standard parabolic subgroup by an element of $A_S$ is called a \emph{parabolic subgroup}.
The study of parabolic subgroups has been an important source of research in Artin groups over the last forty years. These are natural and easy to define subgroups. They are the main ingredient of complexes in which Artin groups act, as the Deligne complex \citep{Deligne,CharneyDavis} or the complex of irreducible parabolic subgroups \citep*{CGGW}. However, as it happens for most questions in Artin groups, basic properties of parabolic subgroups are in general unknown. Some of the facts we know about are the following: In his thesis, \cite{Vanderlek} proved that a standard parabolic subgroup is again an Artin group, and we also know that they are convex in every case \citep{CP}. The structure of centralisers of parabolic subgroups and many of their properties have been well studied only certain cases by \cite{Paris} and \cite{Godelle,Godelle2,GodelleFC}, among others; and we only know if the intersection of parabolic subgroups is again a parabolic subgroup for a few families of Artin groups \citep{CGGW, Rose, CMV}. 

\medskip

In this paper we discuss in which cases embeddings  of parabolic subgroups into the Artin group merge conjugacy classes. This is also called the \emph{conjugacy stability} problem for parabolic subgroups.

\begin{definition}
A parabolic subgroup $P$ of an Artin group $A$ is \emph{conjugacy stable} in $A$ if for every $x,y\in P$ such that $g^{-1}xg=y$, $g\in G$, there is $\hat{g}\in P$ such that $\hat{g}^{-1}x\hat{g}=y$.
If $P$ is not conjugacy stable in~$A$ we say that the inclusion of $P$ into $A$ \emph{merges conjugacy classes}.
\end{definition}
 
This problem has been solved only for some specific families of Artin groups. \cite{Meneses} proved that parabolic subgroups of braid groups are always conjugacy stable. However, for Artin groups this is not always the case. In \citep*{CCC}, we give an explicit classification for \emph{spherical-type} (or finite type) Artin groups, which are the groups that become finite when adding to their presentation the relations $s^2=1$ for every $s\in S$. For large type and FC-type, a simpler question was addressed by \cite{GodelleConjugacy}: He studied what happens  if in the definition of conjugacy stable we impose $g$ to be an element of $S$. 
At the end of \citep*{CMV}, we completely classify the parabolic subgroups of large Artin groups up to conjugacy stability, using the aforementioned results of Paris and Godelle. The aim of this article is to use these results to prove that conjugacy stability problem can be solved for every Artin group satisfying three properties that are conjectured to always hold in Artin groups.  

\smallskip

If for an element~$\alpha$ in an Artin group there is a unique minimal (with respect to the inclusion) parabolic subgroup~$P_\alpha$ containing $\alpha$, we say that $P_\alpha$ is the \emph{parabolic closure} of $\alpha$. We will show:

\bigskip
\noindent
\textbf{Theorem A.} \textit{Let $A$ be a standardisable (\autoref{def:standardizable}) Artin group satisfying the ribbon property (\autoref{def:ribbon}) and such that every element in $A$ has a parabolic closure. Then, there is an algorithm that decides whether a parabolic subgroup~$P$ of $A$ is conjugacy stable in~$A$ or not.}

\bigskip
\noindent  The existence of parabolic closures --which is a consequence of the intersection of two parabolic subgroups being a parabolic subgroup-- and the other two hypotheses of the theorem are conjectured to be true for all Artin groups. In particular, they are known to be true for spherical-type Artin groups \citep{Godelle,CGGW}. 
The standardisation and ribbon properties are true for FC-type and two-dimensional Artin groups \citep{Godelle2,GodelleFC}.
 For FC-type, the problem of the intersection of parabolic subgroups is solved for spherical-type parabolic subgroups --the conjugates of some spherical-type standard parabolic subgroup-- by \cite{Rose}. Using these results and the fact that FC-type Artin groups can be seen as amalgamated free-products of spherical-type Artin groups, we will partially solve the conjugacy stability problem for parabolic subgroups of a FC-type Artin group $A$. We will totally solve the problem if $A$ is isomorphic to a free-product of spherical-type Artin groups, by proving the existence of parabolic closures in this case (\autoref{Prop:clausura_free}). This is summarized in Theorem~B. 

\begin{definition}
Given an Artin group $A$ and a parabolic subgroup $P$ of $A$, we say that $P$ is \emph{conjugacy quasi-stable} if for every two elements $x,y\in P$ contained in (possibly different) spherical-type parabolic subgroups of $A$ such that $g^{-1}xg=y$ with $g\in A$, there is $z\in P$ such that $z^{-1}xz=y$.
\end{definition} 
 
 \begin{remark}
Notice that for spherical-type Artin groups being conjugacy quasi-stable is equivalent to be conjugacy stable.
\end{remark}

\noindent 
\textbf{Theorem B.} \textit{Let $A$ be an FC-type Artin group. There is an algorithm that decides whether a given parabolic subgroup~$P$ of $A$ is conjugacy quasi-stable in $A$. In particular, this algorithm can tell whether a spherical-type parabolic subgroup is conjugacy stable or not. }

\textit{
Moreover, if $A$ is isomorphic to a free product of spherical type Artin groups, then every element of~$A$ has a parabolic closure and there is an algorithm that solves the conjugacy stability problem for every parabolic subgroup of $A$.
}

\bigskip

This article is structured in the following way: In \autoref{Section2} we will describe a result of \cite{Paris} that gives an algorithm to decide when two standard parabolic subgroups are conjugate in any Artin group, and we will give  an explicit form for this algorithm; in \autoref{Section3}  we will explain how to modify this algorithm to solve the conjugacy stability problem for parabolic subgroups of Artin groups that satisfy the three hypothesis of Theorem~A; in \autoref{Section4} we will discuss the case of FC-type Artin groups. 

\begin{remark}
After a first preprint of this paper, \cite{MartinB} generalised the results in \citep*{CMV} and showed that the intersection of parabolic subgroups is a parabolic subgroup for two-dimensional Artin groups with a Coxeter graph --see next section-- in which every vertex is disconnected from at most one other vertex. This completed the set of three hypotheses needed in Theorem~A and allowed him two apply \myref{Algoritmo:main}{Algorithm} of \autoref{Section3} to solve the conjugacy problem in this case.

\cite{Haettel} has also proved the three conjectures for Euclidean Artin groups of type $\tilde{A}$ and $\tilde{C}$, so we know that the main theorem works for these groups.

\end{remark}

\section{Conjugate standard parabolic subgroups}\label{Section2}

In this section we explain in detail the results in \citep{Paris} to decide when two standard parabolic subgroups are conjugate in an Artin group $A_S$. This work is based on the part of Daan Krammer's thesis that solves the conjugacy problem in Coxeter groups, which is published in \citep{Krammer}. To begin, we first need to know how to define the Coxeter graph of an Artin group and the classification of Artin groups of spherical type.

\begin{definition}
The \emph{Coxeter graph} $\Gamma_S$ of the Artin group $A_S$ is the graph defined by the following data:

\begin{itemize}

\item The set of vertices of $\Gamma_S$ is $S$.

\item There is an edge connecting $s$ and $t$ if and only if $m_{s,t}>2$. This edge if labeled with $m_{s,t}$ if $m_{s,t}>3$.

\end{itemize}
\noindent
If $\Gamma_S$ is connected, we say that $A_S$ is \emph{irreducible}.

\end{definition}

\noindent
In \autoref{coxeter}, the reader can find the classification \citep{Coxeter} of the ten types of irreducible Artin groups of spherical type. All the other Artin groups of spherical type are direct products of irreducible ones. When useful, we will refer to $A_S$ as $A_n$, $B_n$, $D_n \dots$, but normally we will say that the Artin group and Coxeter graph are of type $A_n$, $B_n$, $D_n \dots$ We denote the generators of $A_S$ by $s_1,s_2,s_3,\dots$, accordingly with the numbering of \autoref{coxeter}.

\begin{figure}[h]
  \centering
  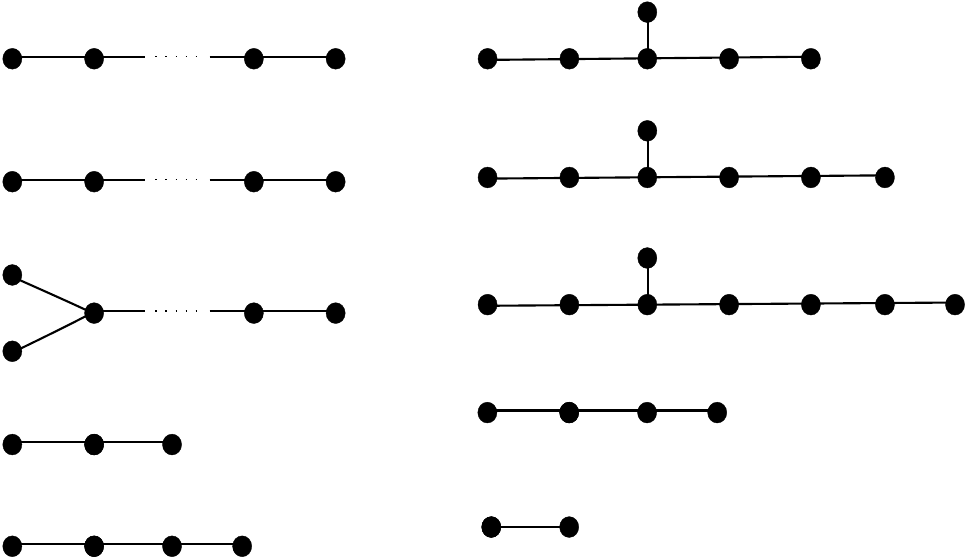
  \medskip
  \caption{Classification of irreducible Coxeter graphs of finite type}
  \label{coxeter}
\end{figure}

\smallskip

 Given an Artin group $A_S$, the submonoid $A_S^+$ of $A_S$ generated by $S$ has the exactly same presentation as $A_S$ (seen as monoid) \citep{Paris2}. If $A_S$ is an Artin group of spherical type, it has a Garside structure. This implies that if $A_S$ has spherical type there is a lattice order $\po$ defined by ``$a\po b$ iff $\exists c\in A_S^+,$ $ac=b$''. The least common multiple of all generators of $S$ is called the Garside element of $A_S$ and is denoted by $\Delta$. By \citep{BS} we know that the centre $Z(A_S)$ of $A_S$ is generated either by $\Delta$ or by $\Delta^2$. 

\smallskip

For Artin groups of type $A_n\, (n \geq 2), D_n \,(n \geq 5), E_6$ and $I_2(m) \,(m\geq 5 \text{ and odd} )$, $\Delta^2$ generates the centre of the group. Otherwise, $\Delta$ generates the centre of $A_S$. In the first case, the conjugation by~$\Delta$ can be seen as a reflection automorphism  of~$\Gamma_S$.  These conjugations, that are well-known by experts, are detailed in what follows:

\begin{itemize}

\item For $A_n,\, n\geq 2$, one has that $\Delta^{-1}s_i\Delta = s_{n-i+1}$.

\item For $D_n,$ with $n \geq 5$ and $n \text{ odd}$, the conjugation by $\Delta$ permutes $s_1$ and $s_2$ and fixes the other generator.

\item For $E_6$, the conjugation by $\Delta$ fixes $s_1$ and $\Delta^{-1}s_i\Delta = s_{8-i}$ for $i\neq 1$.

\item For $I_2(m),$ with $m \geq 5$  and  $m \text{ odd}$, the conjugation by $\Delta$ permutes $s_1$ and $s_2$.

\end{itemize}

We will be specially interested in the Artin groups such that the conjugation by~$\Delta$ can be seen as a reflection automorphism  of~$\Gamma_S$:

\begin{definition}
We say that $A_S$ is a \emph{twistable} Artin group if it is one of the following Artin groups of spherical type:
\[A_n,\, n\geq 2; \quad D_n,\, n \geq 5 \text{ and } n \text{ odd;}\quad E_6; \quad I_2(m),\,  m \geq 5 \text{ and } m \text{ odd.}\]
\end{definition}

\medskip

Thanks to \cite{Vanderlek}, we also know that a standard parabolic subgroup $A_Y$ is an Artin group having as Coxeter graph $\Gamma_Y\subset \Gamma_S$. If~$A_Y$ has spherical type, we denote its Garside element by $\Delta_Y$. 

\smallskip

Suppose that~$A_X$ is a maximal proper standard parabolic subgroup of a twistable standard parabolic subgroup~$A_Y$ of~$A_S$, in other words, $X=Y\setminus\{t\}$, $t\in X$. If~$A_Y$ is of type~$A_n$, $n$~odd, suppose that~$t$ is not the central generator of~$A_Y$. If~$A_Y$ is of type~$E_6$ suppose that~$t$ does not correspond to~$s_1$ or~$s_4$ and if $A_Y$ is of type $D_n$ suppose that $t$ corresponds to either~$s_1$ or~$s_2$. Then $\Delta_Y^{-1}A_X\Delta_Y$ is a standard parabolic subgroup of~$A_Y$ different from~$A_X$. (If $t$ is one of the forbidden generators, then $\Delta_Y^{-1}A_X\Delta_Y=A_X$). This is the main ingredient of Paris' result, which states that two standard parabolic subgroups~$A_X$ and~$A_{X'}$ are conjugate if and only if it is possible to go from one to the other by performing those types of conjugations or ``twists''.


\medskip

Let $X\subset S$ and define $\mathrm{Adj}(X)$ as the set of vertices in $\Gamma_S$ that are adjacent to $\Gamma_X$. We will consider lists of couples $(Y,c)$, where  $ Y \subset S $ is a subset of generators and  $c\in A_S$ is an element that conjugates the set $X$ to the set $Y$.
For a given $X\subset S$, we will recursively construct the list $\mathcal{V}_X$ as follows. Start the list with the couple $(X,1)$. For every $(Y,c)$ in the list and for every $t\in \mathrm{Adj}(Y)$, take the connected component $\Gamma_{Y'}$ of $\Gamma_{Y\cup \{t\}}$ containing $t$. If this component is twistable, conjugate $Y$ by the Garside element $\Delta_{Y'}$ of the component. If the result $Z$ is a subset of generators that is not contained in some couple of the list, add the couple $(Z,c \Delta_{Y'})$. Repeat this process. To prove that the process stops at some point, just observe that the set of standard parabolic subgroups of an Artin group is finite.

\begin{theorem}\label{Paris}
Given an Artin group $A_S$ and two standard parabolic subgroups~$A_X$ and~$A_{X'}$, $A_X$ is conjugate to~$A_{X'}$ if and only if there is a couple $(X',c)$ in $\mathcal{V}_X$, in which case~$c$ is a conjugacy element. 
\end{theorem}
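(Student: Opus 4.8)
The plan is to prove the two implications separately. The ``if'' direction will be a direct induction on the construction of $\mathcal{V}_X$, whereas the ``only if'' direction will rest on the characterisation of \citep{Paris} recalled just above: that $A_X$ and $A_{X'}$ are conjugate in $A_S$ exactly when one can pass from the set $X$ to the set $X'$ by a finite sequence of twists. Granting that characterisation, the theorem amounts to checking that $\mathcal{V}_X$ is a faithful implementation of it, together with the bookkeeping of conjugating elements.

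For the ``if'' direction I would show, by induction on the order in which couples enter $\mathcal{V}_X$, that every couple $(Z,d)\in\mathcal{V}_X$ satisfies $d^{-1}A_Xd=A_Z$; the assertion that $c$ is a conjugating element is then the special case $Z=X'$. The base case is the seed $(X,1)$. For the inductive step, assume $(Y,c)$ is already listed with $c^{-1}A_Xc=A_Y$, pick $t\in\mathrm{Adj}(Y)$, and let $\Gamma_{Y'}$ be the connected component of $\Gamma_{Y\cup\{t\}}$ through $t$, assumed twistable. Two observations do the work. First, since $t\notin Y$ we have $Y'\cap Y=Y'\setminus\{t\}$, a maximal proper standard parabolic of the twistable group $A_{Y'}$, so by the twist description preceding the statement $\Delta_{Y'}^{-1}A_{Y'\setminus\{t\}}\Delta_{Y'}=A_W$ for some $W\subseteq Y'$. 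Second, the vertex sets $Y'$ and $Y\setminus Y'$ both lie in $Y\cup\{t\}$ but in distinct components of $\Gamma_{Y\cup\{t\}}$, hence $\Gamma_S$ carries no edge between them; therefore $A_{Y\setminus Y'}$ commutes elementwise with $A_{Y'}$, in particular with $\Delta_{Y'}$. Combining these, $\Delta_{Y'}^{-1}A_Y\Delta_{Y'}=A_{(Y\setminus Y')\cup W}=A_Z$ with $Z$ exactly the subset computed by the algorithm, so $(c\Delta_{Y'})^{-1}A_X(c\Delta_{Y'})=A_Z$, completing the induction. (If $t$ is one of the ``forbidden'' generators then $W=Y'\setminus\{t\}$ and $Z=Y$, so the step is a harmless no-op and nothing new is added.)

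For the ``only if'' direction, suppose $A_X$ and $A_{X'}$ are conjugate. By Paris' characterisation there is a sequence $X=X_0,X_1,\dots,X_k=X'$ in which each $X_{i+1}$ comes from $X_i$ by one twist: there are $t_i\in\mathrm{Adj}(X_i)$ and a twistable component $\Gamma_{Y_i}$ of $\Gamma_{X_i\cup\{t_i\}}$ through $t_i$ with $X_{i+1}=\Delta_{Y_i}^{-1}X_i\Delta_{Y_i}$ as a set. I would then prove by induction on $i$ that each $X_i$ is the first coordinate of some couple in $\mathcal{V}_X$. The case $i=0$ is the seed. For the step, if $(X_i,c_i)\in\mathcal{V}_X$ then, when the algorithm processes this couple, it examines in particular the vertex $t_i$ and the component $Y_i$, so it either adds $(X_{i+1},c_i\Delta_{Y_i})$ or finds $X_{i+1}$ already present; in either case $X_{i+1}$ occurs in $\mathcal{V}_X$. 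Taking $i=k$ yields a couple $(X',c)\in\mathcal{V}_X$, and by the ``if'' direction $c$ conjugates $A_X$ onto $A_{X'}$. Termination of the construction is immediate: $S$ is finite, hence so is the collection of standard parabolic subgroups, and a couple is added only when its first coordinate has not appeared before.

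The hard part will be, as expected, the ``only if'' part of Paris' characterisation itself: that \emph{every} conjugator in $A_S$ between two standard parabolic subgroups can be replaced by a product of Garside elements of twistable components. This is the deep input, proved in \citep{Paris} by reducing to the conjugacy of standard parabolic subgroups of the Coxeter group $W_S$ --where it is the content of Krammer's thesis \citep{Krammer}-- and then lifting the resulting sequence of reflections back to $A_S$; a self-contained proof would have to redo that work. With that statement available, the only delicate point left on our side is the verification carried out in the two inductions above, namely that the algorithm's elementary moves (adjoin an adjacent vertex, twist the component through it by its Garside element, carry the commuting complement unchanged) range over exactly the twists appearing in Paris' characterisation.
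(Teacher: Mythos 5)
Your proposal is correct and follows essentially the same route as the paper: the paper's proof is simply the observation that the statement is a reformulation of \citep[Theorem~4.1]{Paris} together with the remark that $c$ is a conjugating element ``by its own construction'', and your two inductions (on the construction of $\mathcal{V}_X$ for the ``if'' direction, and on the length of the twist sequence for the ``only if'' direction) are exactly the bookkeeping that remark compresses, with the deep input deferred to Paris and Krammer in both cases.
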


\begin{proof}
This theorem is a reformulation of \cite[Theorem~4.1]{Paris}. We can see that $c$ is a conjugacy element by its own construction. 
\end{proof}

In \myref{Algoritmo:Paris}{Algorithm}, we give to Paris' result an explicit algorithmic form. The algorithm tells us when two standard parabolic subgroups $A_X$ and $A_{X'}$ are conjugate. If they are not, it constructs the whole list $\mathcal{V}_X$. 

\bigskip

\begin{algorithm}[H]
\SetKwInOut{Input}{Input}\SetKwInOut{Output}{Output}

\BlankLine

\Input{The Coxeter graph $\Gamma_S$ of an Artin group $A_S$ and two subsets $X,{X'}\subset S$.}
\Output{A conjugating element between the parabolic subgroups $A_X$ and $A_{X'}$ or ``There is no conjugating element''.}
\BlankLine
\BlankLine

\If{$|X|\neq|{X'}|$ }{ \smallskip
\Return{``There is no conjugating element'';}}

\medskip

$\mathcal{V}=\{(X,1)\}$;


\smallskip

\For{$(Y,c) \in \mathcal{V}$}{

\smallskip

\For{$t\in \mathrm{Adj}(Y)$}{
\smallskip

\If{the connected component $\Gamma_{Y'}$ of $\Gamma_{Y\cup \{t\}}$ containing $t$ is twistable}{

\smallskip
$Z= \Delta_{Y'}^{-1} Y \Delta_{Y'}$;

\smallskip
\If{ $Z$ is not the first element of any couple in $\mathcal{V}$}
{
\smallskip

$\mathcal{V}=\mathcal{V}\cup \{(Z,c \Delta_{Y'})\}$;

}

\smallskip
\If{$Z={X'}$}
{

\smallskip

\Return{$c \Delta_{Y'}$ ;}

}

}}}

 \Return{``There is no conjugating element'';}
\caption{Algorithm that finds a conjugating element between two standard parabolic subgroups or tells that it does not exist.}
\label{Algoritmo:Paris}
\end{algorithm}

\bigskip

\noindent\textbf{Example.}\,  Consider the spherical-type Artin group~$E_7$, as depicted in \autoref{coxeter}. We are going to see that the parabolic subgroup $A_X$ with $X=\{s_1,s_2,s_3,s_4,s_6\}$ is conjugate to $A_{X'}$, where $X'=\{s_2,s_4,s_5,s_6,s_7\}$. First, we take $s_5\in \mathrm{Adj}(X)$. The set of generators $X\cup \{s_5\}=\{s_1,s_2,s_3,s_4,s_5, s_6\}$ defines a connected spherical-type parabolic subgroup isomorphic to $E_6$, which is twistable. If we conjugate $X$ by the Garside element of $A_{X\cup \{s_5\}}$, we obtain the set of generators $Y=\{s_1,s_2,s_4,s_5,s_6\}$. Now take $s_7\in \mathrm{Adj}(Y)$. The group defined by $Y\cup \{s_7\}$ has the connected component $A_Z$, $Z=\{s_1,s_4,s_5,s_6,s_7\}$, which is a (twistable) braid group. Conjugating by the corresponding Garside element, we finally obtain $\Delta_Z^{-1}Y\Delta_Z = X'$.

\section{Solution to the conjugacy stability problem}\label{Section3}

In this section we will explain two of the three hypothesis of Theorem~A, namely the ribbon property and the property of being standardisable. After that, we will construct the main algorithm of this paper to know when the embedding of a standard parabolic subgroup merges conjugacy classes.

\medskip

We first describe the results of \citep{Godelle, Godelle2, GodelleFC} concerning the set of elements conjugating two standard parabolic subgroup of an Artin group $A_S$. Suppose that $A_{X}$, $X\subset S\setminus T$, is a standard parabolic subgroup of spherical type and let $X'=X\setminus \{t\}$, for some $t\in X$. Since $X$ has spherical type, $X'$ also has spherical type and we can consider $\Delta_X$ and $\Delta_{X'}$.  We have that  $$\Delta_{X}^{-1}\Delta_{X'}A_{X'}\Delta_{X'}^{-1}\Delta_{X}= \Delta_{X}^{-1}A_{X'}\Delta_{X}=A_Y,$$
for some subset $Y\subset X$.  The conjugating element 
$\Delta_{X'}^{-1}\Delta_{X}$ and its inverse is what Godelle respectively calls an elementary {$(X',Y)$--ribbon} and an elementary {$(Y,X')$--ribbon}. 

In general, for any (not necessarily of spherical type) parabolic subgroup $A_T$ of $A_S$, if there is $s\in S$ such that the component $\Gamma_{U}$ of $\Gamma_{T\cup \{s\}}$ that contains~$s$ is of spherical type, we call  $r_{T,s}:=\Delta_{U\setminus\{s\}}^{-1}\Delta_{U}$ and its inverse \emph{elementary ribbons} --notice that $\Gamma_{U}$ does not need to be twistable--. We say that an element $r=r_1r_2\cdots r_q$ is a \emph{$(T,T')$--ribbon} if and only if there is a sequence of sets of generators ${T=T_1}, T_2, \dots, T_{q+1}=T'$ such that each $r_i$ is an elementary  $(T_i,T_{i+1})$--ribbon. The set of all $(T,T')$--ribbons is denoted by $\mathrm{Ribb}(T,T')$.  When referring to a $(T,T')$--ribbon without caring about the specific $T'$, we will use the term $(T,-)$--ribbon.

\medskip

Now we will see some properties about ribbons. The following lemma will allow us to work some of the proofs using positive elementary ribbons and treat the negative ones as an analogous case:

\begin{lemma}\label{lemma:negative_ribbons}
Let $A_S$ be an Artin group, $X\subset S$ and $a\in S\setminus X$. Suppose that $\Gamma_Y$ is the component of $\Gamma_{X\cup \{a\}}$ that contains $a$. If there is an elementary ribbon~$r_{X,a}= \Delta_{Y\setminus \{a\}}^{-1} \Delta_Y$, then there are $T\subset S$, $s\in S\setminus T$ such that $r_{T,s}= \Delta^{-1}_{Y\setminus \{s\}}\Delta_Y$ and $r_{X,a}^{-1}=\Delta_{Y\setminus \{s\}}\Delta_Y^{-1}$.

\end{lemma}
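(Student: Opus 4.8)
The statement asserts that a negative elementary ribbon $r_{X,a}^{-1} = \Delta_{Y\setminus\{a\}}\Delta_Y^{-1}$ is itself a (positive) elementary ribbon $r_{T,s}$ for a suitable pair $(T,s)$. The natural guess is that $r_{T,s}^{-1} = \Delta_{Y\setminus\{s\}}^{-1}\Delta_Y$ should equal $\Delta_{Y\setminus\{a\}}\Delta_Y^{-1}$, i.e. we want a generator $s\in Y$ such that $\Delta_{Y\setminus\{s\}}^{-1}\Delta_Y = \Delta_{Y\setminus\{a\}}\Delta_Y^{-1}$, which rearranges to $\Delta_Y \Delta_{Y\setminus\{a\}}^{-1} \Delta_Y = \Delta_{Y\setminus\{s\}}$. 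The left-hand side is a familiar object: conjugation by $\Delta_Y$ sends $A_{Y\setminus\{a\}}$ to a standard parabolic $A_{Y'}$ with $Y' = \tau_Y(Y\setminus\{a\})$, where $\tau_Y$ is the permutation of $Y$ induced by $\Delta_Y$ on the generating set (this is exactly the twist discussed before \autoref{Paris}, available in all spherical types since $\Delta_Y$-conjugation always permutes the atoms). So $\Delta_Y^{-1}\Delta_{Y\setminus\{a\}}\Delta_Y = \Delta_{Y'}$ where $Y' = Y\setminus\{\tau_Y^{-1}(a)\}$, hence $\Delta_Y\Delta_{Y\setminus\{a\}}^{-1}\Delta_Y = \Delta_Y \Delta_Y \Delta_{Y'}^{-1} = \Delta_Y^2 \Delta_{Y\setminus\{s\}}^{-1}$ with $s := \tau_Y^{-1}(a)$. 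Since $\Delta_Y^2$ is central in $A_Y$, this equals $\Delta_{Y\setminus\{s\}}^{-1}\Delta_Y^2$, which is not quite $\Delta_{Y\setminus\{s\}}$; so a direct identification needs care about which power of $\Delta$ is central and about orientation of the ribbon.

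First I would set $s := \tau_Y^{-1}(a) \in Y$, so that $s\notin Y\setminus\{s\}$, and let $Z := Y\setminus\{s\}$. I would verify that $Z\cup\{s\} = Y$ has $\Gamma_Y$ as the component of itself containing $s$ (trivially), and that $\Gamma_Y$ is of spherical type by hypothesis, so $r_{Z,s} = \Delta_{Z}^{-1}\Delta_Y$ is a legitimate elementary ribbon; take $T := Z$. Then the whole content is the algebraic identity $\Delta_{Z}^{-1}\Delta_Y = \Delta_{Y\setminus\{a\}}\Delta_Y^{-1}$, equivalently $\Delta_Y \Delta_{Y\setminus\{a\}} = \Delta_{Z}^{-1}\Delta_Y^2$... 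I would instead prove the cleaner equivalent form $\Delta_Y^{-1}\Delta_Z = \Delta_Y^{-1}\Delta_{Y\setminus\{a\}}^{-1}\Delta_Y \cdot \Delta_Y^{-1}\cdot\Delta_Y$, so let me just directly establish $\Delta_{Y\setminus\{a\}}\Delta_Y^{-1} = \Delta_Z^{-1}\Delta_Y$ by multiplying out: it is equivalent to $\Delta_Z\Delta_{Y\setminus\{a\}} = \Delta_Y\Delta_Y\Delta_Y^{-1}\cdot$... the point is that the real identity to nail down is the standard one $\Delta_Y \, \Delta_{Y\setminus\{a\}}^{-1}\, \Delta_Y^{-1} = \Delta_{Z}^{-1}$ up to the central element $\Delta_Y^{\pm 2}$, and the sign is fixed by comparing lengths (both $r_{X,a}$ and $r_{Z,s}^{-1}$ have the same image in the abelianization / same exponent sum, namely $\ell(\Delta_{Y\setminus\{a\}}) - \ell(\Delta_Y) < 0$). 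I would carry out this length/exponent-sum bookkeeping carefully to pin down that it is exactly $\Delta_{Z}^{-1}\Delta_Y$ and not $\Delta_{Z}^{-1}\Delta_Y^{\pm 1}$ times a nontrivial central power.

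The genuinely delicate point — the one I expect to be the main obstacle — is the case analysis hidden inside "$\tau_Y$ is a well-defined permutation of $Y$ with $\tau_Y(Y\setminus\{a\}) = Y\setminus\{s\}$ for an appropriate $s$". For the twistable types ($A_n$ odd, $D_n$ odd, $E_6$, $I_2(m)$ odd) $\tau_Y$ is the nontrivial graph automorphism and $s = \tau_Y^{-1}(a) \ne a$ in general; for all other spherical types $\Delta_Y$ is central, $\tau_Y = \mathrm{id}$, and then $s = a$, $T = Y\setminus\{a\} = X$ (when $Y = X\cup\{a\}$), and the identity collapses to $\Delta_{X}\Delta_Y^{-1} = \Delta_X^{-1}\Delta_Y$, i.e. $\Delta_X^2 = \Delta_Y^2$?? — no: when $\Delta_Y$ central, $r_{X,a} = \Delta_X^{-1}\Delta_Y$ and $r_{X,a}^{-1} = \Delta_Y^{-1}\Delta_X = \Delta_X\Delta_Y^{-1}$, and I want this to be $\Delta_{Z}^{-1}\Delta_Y$; taking $Z = X$ works iff $\Delta_X\Delta_Y^{-1} = \Delta_X^{-1}\Delta_Y$, which is false, so in this case one must instead take $s$ so that $\Delta_Y\Delta_Z^{-1}\Delta_Y$... this shows the obstacle is real and the correct $(T,s)$ must be read off type-by-type. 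I would therefore organize the proof as: (i) reduce to finding $(T,s)$ with $\Delta_T^{-1}\Delta_Y = \Delta_{Y\setminus\{a\}}\Delta_Y^{-1}$; (ii) observe $\Delta_{Y\setminus\{a\}}\Delta_Y^{-1} = \Delta_Y^{-1}\cdot(\Delta_Y\Delta_{Y\setminus\{a\}}\Delta_Y^{-1})$ and that $\Delta_Y\Delta_{Y\setminus\{a\}}\Delta_Y^{-1} = \Delta_{Y\setminus\{s\}}$ where $Y\setminus\{s\} = \mathrm{inn}_{\Delta_Y}(Y\setminus\{a\})$ is the image of the standard parabolic $A_{Y\setminus\{a\}}$ under conjugation by $\Delta_Y$, which is again standard parabolic because $\Delta_Y$-conjugation permutes atoms — so $s$ is the unique atom with $\Delta_Y s \Delta_Y^{-1} \notin A_{Y\setminus\{a\}}$... wait, $s$ is characterized by $Y\setminus\{s\} = \{\Delta_Y u\Delta_Y^{-1} : u\in Y\setminus\{a\}\}$; (iii) set $T := Y\setminus\{s\}$, so $T\cup\{s\} = Y$ has spherical-type component $\Gamma_Y\ni s$ and $r_{T,s} = \Delta_T^{-1}\Delta_Y$, and then $r_{T,s} = \Delta_{Y\setminus\{s\}}^{-1}\Delta_Y = (\Delta_Y\Delta_{Y\setminus\{a\}}\Delta_Y^{-1})^{-1}\Delta_Y = \Delta_Y\Delta_{Y\setminus\{a\}}^{-1}\Delta_Y^{-1}\Delta_Y = \Delta_Y\Delta_{Y\setminus\{a\}}^{-1}$, which is $r_{X,a}^{-1}$ up to inversion — matching the asymmetry in the statement (the lemma gives $r_{T,s} = \Delta_{Y\setminus\{s\}}^{-1}\Delta_Y$ and $r_{X,a}^{-1} = \Delta_{Y\setminus\{s\}}\Delta_Y^{-1}$, so in fact $r_{T,s} = r_{X,a}^{-1}$ exactly when we also recognize $\Delta_{Y\setminus\{a\}}^{-1} = \Delta_Y^{-1}\Delta_{Y\setminus\{s\}}\Delta_Y\cdot\Delta_Y^{-1}$, i.e. the two displayed formulas are consistent). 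The routine part is then just substituting and using centrality of $\Delta_Y^2$ where needed; the part requiring actual thought is confirming in each spherical type that $a\mapsto s$ is well-defined and that $s\in Y\setminus(Y\setminus\{s\})$, which I would dispatch by invoking that $\Delta_Y$-conjugation always induces a permutation of the atoms (true for every spherical-type Artin group, cf. the discussion preceding \autoref{Paris}), so no genuine case-check on the ten types is needed beyond that single structural fact.
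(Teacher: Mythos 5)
Your underlying idea is the right one --- and it is the paper's: conjugation by $\Delta_Y$ is a diagram automorphism of $\Gamma_Y$, so it carries $Y\setminus\{a\}$ to $Y\setminus\{s\}$ with $s=\Delta_Y^{-1}a\Delta_Y$ and carries $\Delta_{Y\setminus\{a\}}$ to $\Delta_{Y\setminus\{s\}}$, and one then takes $T=(X\cup\{a\})\setminus\{s\}$ so that $\Gamma_Y$ is the component of $\Gamma_{T\cup\{s\}}$ containing $s$ and $r_{T,s}=\Delta_{Y\setminus\{s\}}^{-1}\Delta_Y$. But your execution goes wrong in the very first line: the inverse of $r_{X,a}=\Delta_{Y\setminus\{a\}}^{-1}\Delta_Y$ is $\Delta_Y^{-1}\Delta_{Y\setminus\{a\}}$, \emph{not} $\Delta_{Y\setminus\{a\}}\Delta_Y^{-1}$ (these differ unless the two Garside elements commute). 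With the correct formula the whole lemma is a one-line computation,
$$r_{X,a}^{-1}=\Delta_Y^{-1}\Delta_{Y\setminus\{a\}}=\bigl(\Delta_Y^{-1}\Delta_{Y\setminus\{a\}}\Delta_Y\bigr)\Delta_Y^{-1}=\Delta_{Y\setminus\{s\}}\Delta_Y^{-1},$$
and there is no sign, length, or central-power bookkeeping to do.

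Because of that initial slip you also conflate the two separate assertions of the lemma into the single (false) identity $\Delta_{Y\setminus\{s\}}^{-1}\Delta_Y=\Delta_{Y\setminus\{a\}}\Delta_Y^{-1}$. The lemma does not claim $r_{T,s}=r_{X,a}^{-1}$; it claims (i) that $r_{X,a}^{-1}$ has the shape $\Delta_{Y\setminus\{s\}}\Delta_Y^{-1}$ of a \emph{negative} elementary ribbon, and (ii) that the corresponding \emph{positive} elementary ribbon $r_{T,s}=\Delta_{Y\setminus\{s\}}^{-1}\Delta_Y$ is legitimately defined for some $T$ (this is what makes \autoref{remark:negativeribbons} and \autoref{remark_negative} work). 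Chasing the conflated identity is what produces your spurious "obstacle" in the case where $\Delta_Y$ is central: there $s=a$, $T=X$, and both claims are immediate, with no contradiction. Your final step (iii) accordingly lands on $r_{T,s}=\Delta_Y\Delta_{Y\setminus\{a\}}^{-1}$, which is the reversal of $r_{X,a}$ rather than either quantity in the statement, and the proof never actually closes. The proposed type-by-type analysis and exponent-sum arguments are unnecessary; the single structural fact you already cite (that $\Delta_Y$-conjugation permutes the generators of $Y$) is all that is used in the paper's proof.
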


\begin{proof}
We have that $r^{-1}_{X,a}=\Delta_Y^{-1} \Delta_{Y\setminus \{a\}}= \Delta_{Y\setminus \{s\}}\Delta_Y^{-1}$, where $s=\Delta_Y^{-1}a\Delta_Y$. To see that there is a positive elementary ribbon of the form $\Delta_{Y\setminus \{s\}}^{-1}\Delta_Y$, let $T=(X\cup\{a\}) \setminus \{s\}$. Hence, $\Gamma_Y$ is the component of $\Gamma_{T\cup\{s\}}$ that contains $s$ and $r_{T,s}= \Delta_{Y\setminus \{s\}}^{-1}\Delta_Y$.

\end{proof}

\begin{remark}\label{remark:negativeribbons}
In the above lemma, $r_{T,s}=\Delta^{-1}_{Y\setminus \{s\}}\Delta_Y$ and $r^{-1}_{X,a}=\Delta_{Y\setminus \{s\}}\Delta_Y^{-1}$ are (positive and negative) elementary $(T,X)$--ribbons. Similarly, $r_{X,a}=\Delta_{Y\setminus \{a\}}^{-1} \Delta_Y$ and $r_{T,s}^{-1}=\Delta_{Y\setminus \{a\}} \Delta_Y^{-1}$ are (positive and negative) elementary $(X,T)$--ribbons. 
\end{remark}

The next two lemmas help us understand how the conjugation by ribbons transforms the generators of standard parabolic subgroups:

\begin{lemma}\label{lemma:elementary_ribbons}
Let $A_S$ be an Artin group and $X\subset S$. Let $t\in S\setminus X$ and $Z\subset X\cup \{t\}$ be such that~$\Gamma_Z$ is the connected component of $\Gamma_{X\cup \{t\}}$ containing $t$ and it is of spherical type. Let $X'\subseteq X$ denote any subset defining a connected component $\Gamma_{X'}$ of $\Gamma_X$.  Then

\begin{itemize}

\item If $A_{X'}$ defines a component which is not of spherical type, then $r_{X,t}^{-1}sr_{X,t}=s$, for every $s\in X'$.

\item If $A_{X'}$ is of spherical type and of type different from $A, D, E_6$ and $I_2(m)$, then $r_{X,t}^{-1}sr_{X,t}=s$, for every $s\in X'$.

\item If $A_{X'}$ is of type either $E_6$ or $I_2(m)$, then either $r_{X,t}^{-1}sr_{X,t}=s$ for every $s\in X'$ or $r_{X,t}^{-1}sr_{X,t}=\Delta_{X'}s\Delta_{X'}^{-1}$ for every $s\in X'$.

\item If $A_{X'}$ is of type either $A$ or $D$, then $r_{X,t}^{-1}X'r_{X,t}\subset X\cup\{t\}$.

\end{itemize}

\end{lemma}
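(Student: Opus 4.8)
The plan is to analyze how the elementary ribbon $r_{X,t}$ acts on each connected component $\Gamma_{X'}$ of $\Gamma_X$ separately, using the explicit description $r_{X,t} = \Delta_{Z\setminus\{t\}}^{-1}\Delta_Z$, where $\Gamma_Z$ is the spherical-type component of $\Gamma_{X\cup\{t\}}$ containing $t$. Write $W = Z\setminus\{t\}$, so that $r_{X,t} = \Delta_W^{-1}\Delta_Z$, and note $W \subseteq X$. If $\Gamma_{X'}$ is disjoint from $\Gamma_Z$ (equivalently, from $\Gamma_W$), then every $s \in X'$ commutes with both $\Delta_W$ and $\Delta_Z$, hence with $r_{X,t}$, and there is nothing to prove; so throughout I may assume $\Gamma_{X'}$ meets $\Gamma_Z$, in which case $X' \subseteq W$ (since $X'$ is a connected component of $\Gamma_X$ and $W = Z\setminus\{t\} \subseteq X$), and actually $\Gamma_{X'}$ is one of the connected components of $\Gamma_W$.

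First I would reduce the question to understanding conjugation by $\Delta_Z$ on generators $s \in X' \subseteq W$. Since $X' \subseteq W$ and $\Gamma_{X'}$ is a component of $\Gamma_W$, conjugation by $\Delta_W$ restricted to $A_{X'}$ is the Garside twist of $A_{X'}$ by its own Garside element $\Delta_{X'}$ (because $\Delta_W = \Delta_{X'}\cdot\Delta_{W\setminus X'}$ with the two factors commuting and supported on disjoint full subgraphs). So the real content is: how does $\Delta_Z^{-1}s\Delta_Z$ look for $s$ in the component $\Gamma_{X'}$ of $\Gamma_W = \Gamma_{Z\setminus\{t\}}$? Conjugation by $\Delta_Z$ is an automorphism of $A_Z$ permuting the standard generators up to the well-known graph symmetry of $\Gamma_Z$ (it is the identity if $Z(A_Z)$ is generated by $\Delta_Z$, and the nontrivial diagram reflection in the twistable cases $A_n, D_n\,(n\ \text{odd}), E_6, I_2(m)\,(m\ \text{odd})$). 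Then $r_{X,t}^{-1} s\, r_{X,t} = \Delta_Z^{-1}\Delta_W s\Delta_W^{-1}\Delta_Z = \Delta_Z^{-1}\bigl(\Delta_{X'} s\Delta_{X'}^{-1}\bigr)\Delta_Z$, and I would analyze this by cases on the isomorphism type of $\Gamma_{X'}$.

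The case analysis runs as follows. If $\Gamma_{X'}$ is not of spherical type, or is of spherical type but not of type $A$, $D$, $E_6$ or $I_2(m)$, then $\Delta_{X'}$ is central in $A_{X'}$, so $\Delta_{X'} s\Delta_{X'}^{-1} = s$; it then remains to show $\Delta_Z^{-1} s\Delta_Z = s$ as well. For this I would argue that the diagram automorphism of $\Gamma_Z$ induced by $\Delta_Z$, when it is nontrivial, must restrict to the identity on the component $\Gamma_{X'}$ of $\Gamma_{Z\setminus\{t\}}$: inspecting the list ($A_n$: reversal; $D_n$ with $n$ odd: swap of the two short legs; $E_6$: the order-two symmetry; $I_2(m)$ with $m$ odd: swap), one checks that deleting the vertex $t$ from $\Gamma_Z$ and restricting to a connected component either gives a component on which the symmetry acts trivially, or gives a component of type $A$, $D$, $E_6$ or $I_2(m)$ — so in the non-$ADE_6I_2$ and non-spherical situations the symmetry is necessarily trivial on $\Gamma_{X'}$, giving $r_{X,t}^{-1}sr_{X,t} = s$. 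If $\Gamma_{X'}$ is of type $E_6$ or $I_2(m)$, then $\Delta_{X'} s\Delta_{X'}^{-1}$ is either $s$ (when $\Delta_{X'}$ is central, i.e. $E_6$ does not occur here since $E_6$ is twistable — so more carefully, for $I_2(m)$ with $m$ even $\Delta_{X'}$ is central and for $m$ odd it is the swap, and for $E_6$ it is the nontrivial symmetry) an element of $\{s' : s'\in X'\}$ obtained by the diagram symmetry; combined with the (trivial, by the same deletion argument) action of $\Delta_Z$ one gets exactly the dichotomy in the statement: either $r_{X,t}^{-1}sr_{X,t}=s$ for all $s\in X'$, or $r_{X,t}^{-1}sr_{X,t}=\Delta_{X'}s\Delta_{X'}^{-1}$ for all $s\in X'$ (the "for all" being forced because both $\Delta_{X'}$ and $\Delta_Z$ act by graph automorphisms, which are determined globally on the component). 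Finally, if $\Gamma_{X'}$ is of type $A$ or $D$, then $\Delta_{X'}s\Delta_{X'}^{-1} \in \{s' : s' \in X'\} \subseteq X$, and $\Delta_Z$ conjugates it to another standard generator of $A_Z$, which lies in $Z \subseteq X\cup\{t\}$; hence $r_{X,t}^{-1}X'r_{X,t} \subseteq X\cup\{t\}$.

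I expect the main obstacle to be the deletion-of-vertex bookkeeping in the twistable cases: one must verify, type by type from the list preceding the statement, that removing the vertex $t$ from $\Gamma_Z$ and passing to a connected component of $\Gamma_{Z\setminus\{t\}}$ interacts with the $\Delta_Z$-symmetry exactly as claimed — in particular that a component fixed setwise by the symmetry but on which the symmetry is nontrivial can only be of type $A$, $D$, $E_6$, or $I_2(m)$, which is precisely why the first two bullets give equality $r_{X,t}^{-1}sr_{X,t}=s$. This is a finite check over the graphs in \autoref{coxeter}, but it is the step where the precise structure of the twistable Artin groups and their diagram symmetries is really used, and care is needed when $t$ is itself a fixed vertex of the symmetry versus when $t$ is swapped with another vertex.
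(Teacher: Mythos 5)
Your argument is correct and follows essentially the same route as the paper: factor $r_{X,t}^{-1}sr_{X,t}$ as $\Delta_Z^{-1}\Delta_{X'}s\Delta_{X'}^{-1}\Delta_Z$ and perform the finite check over \autoref{coxeter} showing that a component of $\Gamma_{Z\setminus\{t\}}$ of type other than $A$ or $D$ forces $Z$ to be non-twistable, hence $\Delta_Z$ central (the paper phrases this same check as a list of admissible pairs $(X',Z)$). One cosmetic point: in the first bullet $\Delta_{X'}$ does not exist, but that case is already disposed of by your opening observation that a non-spherical $X'$ cannot lie in $Z$ and is therefore non-adjacent to it.
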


\begin{proof}
If $X'$ is not of spherical type, then $X'$ cannot be contained in $Z$ and $X'$ and $Z$ are not adjacent, so the conjugation by the elementary ribbon $r_{X,t}$ does not modify $X'$. Suppose that $X'$ is different from $A$, $D$, $E_6$ and $I_2(m)$. If $X'$ is not contained in~$Z$, again~$X'$ and~$Z$ are not adjacent and therefore~$X'$ cannot be modified by a conjugation by~$r_{X,t}$. If $X'\subset Z$, then $X'\subset Z\setminus \{t\}$ so  $(X',Z)\in \{(B_{m_1}, B_{m_2}), (B_3,F_4), (H_3,H_4),(E_7, E_8)\}$, for $1<m_1<m_2$. In this case, both~$\Delta_{X'}$ and~$\Delta_Z$ are central in $A_{X'}$ and~$A_Z$ respectively. This means that $r_{X,t}^{-1}sr_{X,t}=s$, for every $s\in X'$. If $X'$ is of type~$E_6$ or~$I_2(m)$, we can suppose as before that $X'\subset Z$. In this case, $(X',Z)\in \{(E_6,E_7), (E_6,E_8) (I_2(5),H_3), (I_2(5),H_4)\}$, so $\Delta_Z$ is central in~$A_Z$ and~$\Delta_{X'}$ is not central in $A_{X'}$. Thus, $r_{X,t}^{-1}sr_{X,t}= \Delta_Z^{-1}\Delta_{X'}s\Delta_{X'}^{-1}\Delta_Z =\Delta_{X'}s\Delta_{X'}^{-1}$ for every $s\in X'$. The last item follows by definition.

\end{proof}

\begin{remark}\label{remark_negative}
By \autoref{lemma:negative_ribbons}, the previous lemma works analogously if we replace the positive elementary $(X,-)$--ribbon $r_{X,t}$ by a negative elementary $(X,-)$--ribbon.
\end{remark}

\begin{lemma}\label{lemma:permutation}
Let $A_S$ be an Artin group, $X\subset S$, and $\alpha$  be an $(X,X)$--ribbon. Let $X'\subseteq X$ denote any subset defining a connected component $\Gamma_{X'}$ of $\Gamma_X$. Then,

\begin{itemize}

\item If $A_{X'}$ has not spherical type or has a spherical type different from $A$, $D$, $E_6$ and $I_2(m)$, then $\alpha^{-1}s\alpha=s$, for every $s\in X'$.

\item If $A_{X'}$ is of type $E_6$ or $I_2(m)$, then either $\alpha^{-1}s\alpha=s$ for every $s\in X'$ or $\alpha^{-1}s\alpha=\Delta_{X'}s\Delta_{X'}^{-1}$ for every $s\in X'$.

\item If $A_{X'}$ is of type $A$ or $D$, then $\alpha^{-1}X'\alpha = X''$, where $\Gamma_{X''}$ is isomorphic to $\Gamma_{X'}$.

\end{itemize}

\end{lemma}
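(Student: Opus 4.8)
The plan is to reduce everything to the single-step statement \autoref{lemma:elementary_ribbons} by induction on the length of a ribbon decomposition. Write $\alpha = r_1 r_2 \cdots r_q$, where each $r_i$ is an elementary $(T_i,T_{i+1})$--ribbon along a sequence of sets of generators $X = T_1, T_2, \dots, T_{q+1} = X$; by \autoref{lemma:negative_ribbons} we may treat each negative elementary ribbon as a positive one, so there is no loss in assuming every $r_i$ is of the form $r_{T_i,t_i}$. The first thing to record is that ribbon conjugation carries connected components to connected components: since $r_i$ is built from Garside elements of the spherical component $U_i$ of $\Gamma_{T_i\cup\{t_i\}}$ containing $t_i$, it commutes with every generator of $T_i$ lying outside $U_i$, so a component $\Gamma_{X'}$ of $\Gamma_{T_i}$ is either disjoint from $U_i$ (hence fixed pointwise by $r_i$) or contained in $U_i\setminus\{t_i\}$; in either case \autoref{lemma:elementary_ribbons} shows that $r_i^{-1}X'r_i$ is a set of standard generators, and it is again a full connected component $\Gamma_{X''}$ of $\Gamma_{T_{i+1}}$ with conjugation by $r_i$ restricting to an isomorphism $A_{X'}\to A_{X''}$ sending standard generators to standard generators (so $\Gamma_{X''}\cong\Gamma_{X'}$). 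Starting from the prescribed component $\Gamma_{X'}$ of $\Gamma_X$, this produces a well-defined chain of components $X'=X'_1,X'_2,\dots,X'_{q+1}=\alpha^{-1}X'\alpha\subseteq X$, each $X'_i$ a component of $\Gamma_{T_i}$, all with isomorphic Coxeter graphs.

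Now I would run the three cases through this chain. If $A_{X'}$ is not of spherical type, or of spherical type outside $\{A,D,E_6,I_2(m)\}$, then every $X'_i$ has the same property, so the first two bullets of \autoref{lemma:elementary_ribbons} (together with \autoref{remark_negative}) say that $r_i$ fixes $X'_i$ pointwise at each step; composing, $\alpha^{-1}s\alpha=s$ for all $s\in X'$. If $A_{X'}$ is of type $E_6$ or $I_2(m)$, then for these types conjugation by the Garside element is the nontrivial graph automorphism $\tau$ of $\Gamma_{X'}$; in particular it fixes $X'$ setwise, so all the $X'_i$ equal $X'$ and, by the third bullet of \autoref{lemma:elementary_ribbons}, each $r_i$ acts on $X'$ either as the identity or as the fixed involution $\tau$ (here $\tau=\tau^{-1}$ since $Z(A_{X'})$ is generated by $\Delta_{X'}^2$ for these types). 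A composite of maps each equal to $\mathrm{id}$ or $\tau$, with $\tau^2=\mathrm{id}$, is again $\mathrm{id}$ or $\tau$, so $\alpha^{-1}s\alpha=s$ for all $s\in X'$ or $\alpha^{-1}s\alpha=\Delta_{X'}s\Delta_{X'}^{-1}$ for all $s\in X'$. Finally, if $A_{X'}$ is of type $A$ or $D$, the last bullet of \autoref{lemma:elementary_ribbons} gives $X'_{i+1}=r_i^{-1}X'_ir_i\subseteq T_{i+1}\subseteq S$ with $\Gamma_{X'_{i+1}}\cong\Gamma_{X'_i}$; chaining these, $\alpha^{-1}X'\alpha=X'_{q+1}=:X''$ is a subset of $X$ with $\Gamma_{X''}\cong\Gamma_{X'}$, as required.

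The main obstacle I anticipate is the bookkeeping in the $E_6$/$I_2(m)$ case: one must check that ``the action is conjugation by $\Delta_{X'}$'' is literally the same involution at every step, so that the partial twists can be composed inside a single group acting on a fixed set $X'$ — this is exactly where one uses that this conjugation is the graph automorphism of $\Gamma_{X'}$ and that $\Delta_{X'}^2$ is central. The $A$/$D$ case requires the (known) fact that ribbon conjugation between spherical-type standard parabolics takes standard generators to standard generators, hence preserves the Coxeter graph up to isomorphism; once that is in hand, the conclusion is purely a matter of composing the steps. The remaining cases are immediate from \autoref{lemma:elementary_ribbons}.
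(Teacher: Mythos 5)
Your proof is correct and follows essentially the same route as the paper: decompose the $(X,X)$--ribbon into elementary ribbons, apply \autoref{lemma:elementary_ribbons} together with \autoref{remark_negative} at each step, and compose, using in the $E_6$/$I_2(m)$ case that conjugation by $\Delta_{X'}$ is a fixed involution of $X'$ because $\Delta_{X'}^2$ is central. The paper's proof is just a terser version of the same argument.
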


\begin{proof}
By definition, $\alpha$ is a product $\prod_{i=1}^k r_i$ of elementary $(X_i,Y_i)$--ribbons, $r_i$, where $Y_i=X_{i+1}$ and $X_1=Y_k=X$. When we conjugate $A_{X'}$ by an elementary  $X$--ribbon, we obtain a parabolic subgroup of the same type. Therefore, by \autoref{lemma:elementary_ribbons} and \autoref{remark_negative} we can distinguish three cases. If $A_{X'}$ has non-spherical type or has spherical type different from $A$, $D$, $E_6$ and $I_2(m)$, then all the conjugations by the elementary ribbons are trivial. If $A_{X'}$ is of type $E_6$ or $I_2(m)$, then $\Delta_{X'}^2$ is the smallest positive power of $\Delta_{X'}$ that is central and all conjugations are as indicated in the second item of \autoref{lemma:elementary_ribbons}.  If $A_{X'}$ is of type $A$ or $D$, the result is trivial.
\end{proof}

\bigskip

Now we define the two main properties that used ribbons that are conjectured to be true for every Artin group:

\begin{definition}\label{def:ribbon}
Given an Artin group $A_S$ and $S'\subseteq S$, we say that a pair $(X,Y)$, $X,Y\subseteq S'$, is \emph{conjugate by ribbons} in $A_{S'}$ if, for any $g\in A_{S'}$,

$$g^{-1}A_{X}g=A_Y \text{ if and only if }  g\in  A_X\cdot (\mathrm{Ribb}(X,Y)\cap A_{S'}).$$
We say that $A_S$ satisfies the \emph{ribbon property} if, for any two sets of generators $X,Y\subset S$, the pair $(X,Y)$ is conjugate by ribbons in $A_Z$ for every $Z\in \{T\subseteq S \,|\, X,Y \subseteq T\}$.
\end{definition}

\begin{definition}\label{def:standardizable}
Let $A_S$ be an Artin group and $X,Y\subset S$. We say that the pair $(X,Y)$ is \emph{standardisable} in $A_S$ if
$$\forall g\in A_S \text{ such that } g^{-1}A_Yg\subseteq A_X \text{ there are } h\in A_X \text{ and } Z\subseteq X  \text{ such that } h^{-1}g^{-1}A_Ygh=A_Z. $$
In particular, if there is no $g\in A_S \text{ such that } g^{-1}A_Yg\subseteq A_X$, then $(X,Y)$ is standardisable. 
We say that $A_S$ is \emph{standardisable}
if every pair $(X,Y)$, $X,Y\subset S$, is standardisable.
\end{definition}

Godelle conjectures that every Artin group is standardisable and has the ribbon property \citep[Conjecture~1, Conjecture~4.2]{Godelle2} after the first article by \citep{Paris} showing the ribbon property and other results about normalizers for spherical-type Artin groups. Godelle proves that FC-type Artin groups satisfy the ribbon property in \citep[Theorem~3.2]{GodelleFC} and in \citep[Proposition~4.3]{Godelle2} he uses the ribbon property to prove that they are also standardisable. He also shows that all two-dimensional Artin groups are standardisable and satisfy the ribbon property, and this is what we use in \citep*{CMV} to solve the conjugacy stability problem for large Artin groups.

\subsection{Proof of Theorem A}

To prove Theorem~A we will first prove the following theorem: 

\begin{theorem}\label{th:main}
Let $A_S$ be an Artin group and let $X\subset S$. There is an algorithm that decides whether $A_X$ is conjugacy stable if the three following properties hold: 

\begin{itemize}

\item For any $Y\subset S$, the pair $(X,Y)$ is standardisable;

\item For any $X_1,X_2\subseteq X$, the pair $(X_1,X_2)$  is conjugate by ribbons in $A_S$ and in $A_X$;


\item Every element $\alpha \in A_X$ has a parabolic closure $P_\alpha$ in $A_S$. 
\end{itemize}
 

\end{theorem}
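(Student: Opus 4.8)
The plan is to show that conjugacy stability of $A_X$ is equivalent to a finite comparison between what the ribbons of $A_S$ do to the standard parabolic subgroups contained in $A_X$ and what the ribbons ``supported on $X$'' do, and then to decide that comparison by a breadth-first search over subsets, in the style of the construction of $\mathcal{V}_X$ in \autoref{Section2}. First I will normalise an instance. Suppose $x,y\in A_X$ and $g^{-1}xg=y$ with $g\in A_S$. By the third hypothesis $x$ and $y$ have parabolic closures $P_x,P_y$; these are parabolic subgroups of $A_S$ contained in $A_X$, and $g^{-1}P_xg=P_y$ since conjugation sends parabolic closures to parabolic closures. Because $(X,\cdot)$ is standardisable, after conjugating $x$ and $y$ inside $A_X$ -- which affects neither the $A_S$- nor the $A_X$-conjugacy relation -- I may assume $P_x=A_{Z_1}$ and $P_y=A_{Z_2}$ with $Z_1,Z_2\subseteq X$, and in particular $x\in A_{Z_1}$, $y\in A_{Z_2}$. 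Then $g^{-1}A_{Z_1}g=A_{Z_2}$, so by the second hypothesis (conjugation by ribbons in $A_S$) I may write $g=a\,r$ with $a\in A_{Z_1}$ and $r\in\mathrm{Ribb}(Z_1,Z_2)$; absorbing $a$ into $x$ by one further conjugation inside $A_X$, I reach the normal form $y=r^{-1}xr$ with $r$ a ribbon. Dually, using the ``in $A_X$'' half of the second hypothesis together with the fact that $P_{\hat g^{-1}x\hat g}=\hat g^{-1}A_{Z_1}\hat g$ for $\hat g\in A_X$, one sees that $x$ and $y$ (with standard closures $A_{Z_1},A_{Z_2}$) are conjugate in $A_X$ if and only if $y=c^{-1}x'c$ for some $A_{Z_1}$-conjugate $x'$ of $x$ and some $c\in \mathrm{Ribb}(Z_1,Z_2)\cap A_X$.

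Next I extract the essential combinatorial data. By \autoref{lemma:permutation} (with \autoref{lemma:elementary_ribbons}), conjugation by a ribbon $r\in\mathrm{Ribb}(Z_1,Z_2)$ induces an isomorphism $A_{Z_1}\to A_{Z_2}$ that is the identity on every connected component which is not of type $A$, $D$, $E_6$ or $I_2(m)$, is the identity or conjugation by $\Delta$ on each $E_6$- or $I_2(m)$-component, and carries each $A$- or $D$-component isomorphically onto a component of $\Gamma_{Z_2}$ of the same type. Hence these isomorphisms lie in an explicit finite set, and two ribbons $r,r'$ satisfy $r^{-1}xr\sim r'^{-1}xr'$ in $A_{Z_2}$ for \emph{every} $x\in A_{Z_1}$ precisely when their ``$A/D$-permutation-and-reflection part'' agrees, the $E_6$- and $I_2(m)$-twists being inner in the component on which they act and hence invisible to conjugacy. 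Denote this finite invariant by $[r]$. For each ordered pair $Z_1,Z_2\subseteq X$ the algorithm computes, by breadth-first search on subsets exactly as for $\mathcal{V}_X$ but now allowing all elementary ribbons and not only twists, the finite set $\mathcal{R}(Z_1,Z_2)=\{[r]:r\in\mathrm{Ribb}(Z_1,Z_2)\}$, together with the finite set $\mathcal{R}_X(Z_1,Z_2)$ obtained by using only elementary ribbons supported on $X$; one shows, via the ribbon property inside $A_X$, that $\mathcal{R}_X(Z_1,Z_2)=\{[r]:r\in\mathrm{Ribb}(Z_1,Z_2)\cap A_X\}$. The algorithm then answers that $A_X$ is conjugacy stable if and only if $\mathcal{R}(Z_1,Z_2)=\mathcal{R}_X(Z_1,Z_2)$ for all $Z_1,Z_2\subseteq X$; this clearly terminates, as $S$ has finitely many subsets.

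For correctness, the easy direction is that equality of these sets implies conjugacy stability: after the normalisation above one has $y=r^{-1}xr$, and picking $r'\in\mathrm{Ribb}(Z_1,Z_2)\cap A_X$ with $[r']=[r]$ gives $r'^{-1}xr'\sim y$ in $A_{Z_2}$, whence $y=(r'c)^{-1}x(r'c)$ for some $c\in A_{Z_2}$; since $r'c\in A_X$, $x$ and $y$ are conjugate in $A_X$. For the converse I will use, for each $Z_1\subseteq X$, a single \emph{rigid} witness $x^\ast\in A_{Z_1}$ with $P_{x^\ast}=A_{Z_1}$ whose $A_{Z_2}$-conjugacy class detects the invariant, i.e.\ such that $r^{-1}x^\ast r\sim r'^{-1}x^\ast r'$ in $A_{Z_2}$ forces $[r]=[r']$. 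One constructs $x^\ast$ component by component, taking on each $A$- or $D$-component an element whose parabolic closure is that whole component and which is genuinely moved by the nontrivial graph symmetry (the reflection in type $A_n$, $n\geq 2$, and the transposition in type $D_n$, $n$ odd). Then for any $r\in\mathrm{Ribb}(Z_1,Z_2)$ the pair $x^\ast$, $r^{-1}x^\ast r$ lies in $A_X$ and is $A_S$-conjugate, so conjugacy stability produces a conjugator in $A_X$; by the ribbon property in $A_X$ this conjugator lies in $A_{Z_1}\cdot(\mathrm{Ribb}(Z_1,Z_2)\cap A_X)$, and rigidity of $x^\ast$ yields $r'\in\mathrm{Ribb}(Z_1,Z_2)\cap A_X$ with $[r']=[r]$. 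Hence $\mathcal{R}(Z_1,Z_2)\subseteq\mathcal{R}_X(Z_1,Z_2)$, and the reverse inclusion is trivial.

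The two steps I expect to be the real obstacles are, first, the identification $\mathcal{R}_X(Z_1,Z_2)=\{[r]:r\in\mathrm{Ribb}(Z_1,Z_2)\cap A_X\}$: that a ribbon assembled from elementary ribbons supported on $X$ lies in $\mathrm{Ribb}(Z_1,Z_2)\cap A_X$ is immediate, but the converse -- that a ribbon of $A_S$ which happens to lie in $A_X$ has the same conjugation action, up to $A_{Z_1}$-conjugacy, as one supported on $X$ -- requires the ribbon property applied inside $A_X$ and some analysis of how ribbons factor; and second, the existence of the rigid witnesses $x^\ast$, a ``generic element'' statement that is true but needs care, since one must simultaneously arrange the parabolic closure to be the full component and ensure the element is not fixed by the relevant outer symmetry (the $E_6$- and $I_2(m)$-twists, being inner, cause no trouble here). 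Everything else -- the reductions above, the finiteness and computability of $\mathcal{R}$ and $\mathcal{R}_X$ via \autoref{lemma:permutation}, and the breadth-first search itself -- is routine given \myref{Algoritmo:Paris}{Algorithm} and the three hypotheses.
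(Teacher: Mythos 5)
Your overall architecture --- normalise a conjugacy instance via parabolic closures, standardisation and the ribbon decomposition, then reduce conjugacy stability to a finite comparison between what $A_S$-ribbons and $A_X$-ribbons can do to subsets of $X$ --- is essentially the paper's. But the combinatorial core is miscalibrated, and the two steps you defer as ``the real obstacles'' are exactly where the proof lives. The error: you declare the visible part of a ribbon's action to be its ``$A/D$-permutation-and-reflection part'' and propose rigid witnesses that detect ``the reflection in type $A_n$, $n\geq 2$, and the transposition in type $D_n$, $n$ odd''. Those are precisely the twistable types, in which the diagram symmetry is realised by conjugation by the Garside element $\Delta_{X'}$ of the component itself --- an element of $A_{Z_2}\subseteq A_X$. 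Hence two ribbons differing by such a reflection always produce $A_{Z_2}$-conjugate images of every $x$, no rigid witness for these symmetries can exist, and including them in $[r]$ makes your criterion $\mathcal{R}(Z_1,Z_2)=\mathcal{R}_X(Z_1,Z_2)$ strictly stronger than conjugacy stability: your algorithm would answer ``not stable'' for stable subgroups, and your converse argument cannot be completed as stated. The genuine obstruction is the type you leave out, $D_{2k}$, where $\Delta$ is central and therefore cannot realise the diagram automorphism; whether that automorphism is realisable inside $A_X$ depends on the existence of adjacent generators of $X$ completing the component to a $D_{2k'+1}$, which is exactly what conditions 2 and 3 in the paper's proof (checked by \myref{Algoritmo:Dn}{Algorithm} and \myref{Algoritmo:D4}{Algorithm}, with the $D_4$ case needing the extra care of composing two transpositions of its $S_3$ of symmetries) are designed to test.

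Beyond this, the witnesses you assert to exist ``with care'' are the actual content of the paper's lemmas and are not generic-element statements: non-conjugacy under the $D_{2k}$ symmetry is proved by the specific element $(s_1s_3\cdots s_n)^{-1}\Delta$ via \autoref{lemma:Dn}, which converts the question into non-conjugacy of the standard parabolics $A_{S\setminus\{s_1\}}$ and $A_{S\setminus\{s_2\}}$ using \autoref{lemma:support}; detecting a nontrivial permutation of the connected components (your component-permutation part of $[r]$, which you do not discuss separately) is done in \autoref{lemma:permutations} with elements $\bigl(\prod_j s_{i,j}\bigr)^{k_i}$ of pairwise distinct lengths, exploiting homogeneity of the Artin relations; and \autoref{lemma:D4} and \autoref{lemma:DK+1} combine these with the ribbon factorisation to pin down which external twists must be internally realisable. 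Your first flagged obstacle (that an $A_S$-ribbon lying in $A_X$ acts, up to $A_{Z_1}$-conjugacy, like one built from elementary ribbons over $X$) is what the hypothesis ``conjugate by ribbons in $A_X$'' is there to supply, so it is not the issue; the second obstacle is where the proposal, as written, fails.
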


\noindent
Let us see that the previous theorem implies Theorem~A:

\begin{proof}[Proof of Theorem~A]
Let $A_S$ be an Artin group. To give a solution to the conjugacy stability problem for parabolic subgroups of Artin groups, we shall notice that the property of being conjugacy stable is preserved under conjugation. Hence, it suffices to give an algorithm that tells if a standard parabolic subgroup $A_X$ is conjugacy stable for every $X\subset S$. In order to satisfy the conditions of Theorem~A, $A_S$ need to be standardisable and conjugate by ribbons and every element $\alpha \in A_S$ has a parabolic closure $P_\alpha$ in $A_S$. In particular, we have the three conditions of \autoref{th:main} for every $X\subset S$, so we have the desired algorithm.

\end{proof}

\noindent
Before showing \autoref{th:main}, we will prove some lemmas:

\begin{lemma}\label{lemma:minimalparabolics}
Let $A_S$ be any Artin group, $X\subset S$ and $\alpha,\beta \in A_S$.  The parabolic closure $P_{\beta^{-1}\alpha \beta}$ exists if and only if $P_\alpha $ exists and in this case $P_{\beta^{-1}\alpha \beta}= \beta^{-1}P_\alpha \beta$.
\end{lemma}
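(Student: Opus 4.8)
\textbf{Proof plan for \autoref{lemma:minimalparabolics}.}

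The plan is to use the fact that conjugation by a fixed element $\beta \in A_S$ induces a bijection $\phi_\beta \colon A_S \to A_S$, $\phi_\beta(g) = \beta^{-1}g\beta$, which sends parabolic subgroups to parabolic subgroups and preserves inclusions. Concretely, if $P = g^{-1}A_Y g$ is a parabolic subgroup, then $\beta^{-1}P\beta = (g\beta)^{-1}A_Y(g\beta)$ is again parabolic, and $P_1 \subseteq P_2$ if and only if $\beta^{-1}P_1\beta \subseteq \beta^{-1}P_2\beta$. Since $\alpha \in P$ if and only if $\beta^{-1}\alpha\beta \in \beta^{-1}P\beta$, the correspondence $P \mapsto \beta^{-1}P\beta$ is an inclusion-preserving bijection between the set of parabolic subgroups containing $\alpha$ and the set of parabolic subgroups containing $\beta^{-1}\alpha\beta$.

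From here the argument is a formal consequence. Suppose $P_\alpha$ exists, i.e.\ there is a unique minimal (by inclusion) parabolic subgroup containing $\alpha$. Then $\beta^{-1}P_\alpha\beta$ is a parabolic subgroup containing $\beta^{-1}\alpha\beta$; I claim it is the unique minimal one. Indeed, if $Q$ is any parabolic subgroup with $\beta^{-1}\alpha\beta \in Q$, then $\beta Q \beta^{-1}$ is a parabolic subgroup containing $\alpha$, hence $P_\alpha \subseteq \beta Q\beta^{-1}$ by minimality of $P_\alpha$, and conjugating back gives $\beta^{-1}P_\alpha\beta \subseteq Q$. This shows $\beta^{-1}P_\alpha\beta$ is contained in every parabolic subgroup containing $\beta^{-1}\alpha\beta$, so it is the (unique) minimal one; thus $P_{\beta^{-1}\alpha\beta}$ exists and equals $\beta^{-1}P_\alpha\beta$. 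The converse direction follows by symmetry: apply what was just proved with $\alpha$ replaced by $\beta^{-1}\alpha\beta$ and $\beta$ replaced by $\beta^{-1}$, noting that $\beta(\beta^{-1}\alpha\beta)\beta^{-1} = \alpha$.

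I do not expect any serious obstacle here: the only thing to be careful about is that "parabolic subgroup" is a conjugation-closed class (which is immediate from the definition, since a conjugate of a conjugate of a standard parabolic is again a conjugate of a standard parabolic) and that minimality is with respect to inclusion, which conjugation preserves. The statement is essentially the observation that the poset of parabolic subgroups containing a given element is carried isomorphically onto the poset of parabolic subgroups containing any conjugate of it.
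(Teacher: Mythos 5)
Your argument is correct and is essentially identical to the paper's own proof: both show that for any parabolic $Q$ containing $\beta^{-1}\alpha\beta$, the conjugate $\beta Q\beta^{-1}$ is a parabolic containing $\alpha$, so $P_\alpha\subseteq\beta Q\beta^{-1}$ and hence $\beta^{-1}P_\alpha\beta\subseteq Q$, with the converse by symmetry. The extra framing via the inclusion-preserving bijection on parabolic subgroups is a harmless elaboration of the same idea.
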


\begin{proof}
Suppose that $P_\alpha$ exists and let $Q$ be a parabolic subgroup containing  $\beta^{-1}\alpha \beta$. Then $\alpha\in \beta Q \beta^{-1}$, where $\beta Q \beta^{-1}$ is a parabolic subgroup. Hence $P_\alpha \subset \beta Q \beta^{-1}$ and $\beta^{-1}P_\alpha \beta\subset Q$. The converse is symmetric. 
\end{proof}

\begin{remark}\label{remark} Let $(*)$ be a property for parabolic subgroups that is preserved under conjugation, such as being of spherical type.
Notice that the previous proof can be adapted to prove that, if $P(*)_\alpha$ is the unique minimal parabolic subgroup containing $\alpha$ and satisfying $(*)$, then $\beta^{-1} P(*)_\alpha \beta$ is the unique minimal parabolic subgroup containing $\beta^{-1}\alpha\beta$ and satisfying $(*)$. 
\end{remark}

The \emph{support}, $\mathrm{supp}(g)$, of a positive element $g\in A_S$ is the set of all generators that appear in any positive word representing $g$. For Artin groups of spherical type, the parabolic closure of an element depends on the element support. In particular, for positive elements we have the following result:

\begin{lemma}[{\citealp[Proposition~6.8]{CGGW}}]\label{lemma:support} Let $A_S$ be an Artin group of spherical type. The parabolic closure of a positive element $g\in A_S$ is $A_{\mathrm{supp}(g)}$.
\end{lemma}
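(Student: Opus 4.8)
The statement to prove is \autoref{lemma:support}: for an Artin group $A_S$ of spherical type, the parabolic closure of a positive element $g \in A_S^+$ is $A_{\supp(g)}$.

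\medskip

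\textbf{Plan.} The approach splits into two parts: first, that $g$ is genuinely contained in $A_{\supp(g)}$ and that this is a parabolic subgroup; second, that no smaller parabolic subgroup contains $g$, which is the substantive claim. The first part is immediate: by definition of support, $g$ admits a positive word in the letters of $\supp(g)$, hence $g \in A_{\supp(g)}^+ \subseteq A_{\supp(g)}$, and $A_{\supp(g)}$ is a standard parabolic subgroup by \cite{Vanderlek}. So it remains to show that if $Q$ is any parabolic subgroup of $A_S$ with $g \in Q$, then $A_{\supp(g)} \subseteq Q$; this gives both existence and the explicit identification of the parabolic closure.

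\medskip

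\textbf{Main step.} Let $Q = w^{-1} A_Y w$ be a parabolic subgroup with $g \in Q$, so $w g w^{-1} \in A_Y$. The key is to use the Garside structure on the spherical-type Artin group $A_S$: since $g$ is positive, work with normal forms and the action of conjugation on simple elements, or more directly invoke that conjugates of parabolic subgroups behave controllably under the lattice order $\po$. The plan is to reduce to the case $Q = A_Y$ standard (i.e., $w = 1$) by the following observation. If $g \in w^{-1} A_Y w$, then we want to show $\supp(g) \subseteq$ (the set of generators of some standard parabolic conjugate). A clean route: among all parabolic subgroups containing $g$, pick one of minimal rank, say $Q_0 = w^{-1} A_Y w$; by convexity of parabolic subgroups (\citealp{CP}) and the fact that for spherical-type groups one can conjugate $g$ by a positive element so that its conjugate lies in a standard parabolic $A_Y$ with $|Y|$ minimal — indeed, replace $g$ by a suitable conjugate $\hat g = p g p^{-1}$ with $p$ chosen so that $\hat g$ is still positive and $\supp(\hat g)$ generates a standard parabolic contained in $Q_0$'s conjugate. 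Then one shows $\supp(\hat g) = Y$ forces, via \autoref{lemma:minimalparabolics}, that $A_{\supp(g)} = w^{-1} A_Y w = Q_0$, so $Q_0$ is the unique minimal one.

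\medskip

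A more self-contained alternative I would pursue: show directly that for positive $g$, if $g \in w^{-1} A_Y w$ then $\supp(g) \subseteq w^{-1}\{$generators$\}w$-translate, by analyzing the left and right normal forms of $g$. Concretely, since $g$ is positive and lies in the parabolic $w^{-1}A_Y w$, and parabolics of spherical-type Artin groups are convex, every prefix and suffix of $g$ (in the Garside sense) lies in $w^{-1}A_Y w$; in particular each generator $s \in \supp(g)$ appears as (or divides) such a prefix, so $s \in w^{-1}A_Y w$, whence $A_{\supp(g)} \subseteq w^{-1}A_Y w = Q$. Combined with $g \in A_{\supp(g)}$ and the fact that $A_{\supp(g)}$ is itself parabolic, this proves $A_{\supp(g)}$ is the parabolic closure.

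\medskip

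\textbf{Expected obstacle.} The delicate point is the claim that a single generator $s \in \supp(g)$ must itself lie in any parabolic subgroup containing $g$ — i.e., passing from "$g$ is a positive word in the letters of $\supp(g)$" to "each such letter is individually in $Q$". This is false for arbitrary (non-positive, non-spherical) situations, so the argument must genuinely exploit both positivity and the Garside/convexity structure of spherical-type groups: one needs that the "atoms" dividing $g$ on the left are in $Q$, and that iterating division recovers all of $\supp(g)$. Getting this divisibility-stays-inside-$Q$ statement precise — likely via convexity of parabolics together with the property that $s \po g$ implies $s$ lies in the convex hull of $\{1, g\}$ — is where the real work lies; everything else is bookkeeping with \autoref{lemma:minimalparabolics} and \cite{Vanderlek}.
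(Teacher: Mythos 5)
First, note that the paper itself does not prove this lemma: it is imported verbatim as \citealp[Proposition~6.8]{CGGW}, so there is no internal argument to compare yours against. Judged on its own, your proposal has the right skeleton --- reduce to showing that every parabolic subgroup $Q$ containing the positive element $g$ contains each letter of $\supp(g)$, then conclude $A_{\supp(g)}\subseteq Q$ --- but the load-bearing step is left unproved. Your first ``Main step'' paragraph is a statement of intent rather than an argument (it ends by asserting the conclusion), so everything rests on the ``self-contained alternative,'' and there the key claim is: if $g\in Q$ is positive and $s\po g$ for an atom $s$, then $s\in Q$. This prefix-closedness of (arbitrary, non-standard) parabolic subgroups is precisely the hard technical content; it is one of the main theorems of \citep{CGGW}, proved there with the full Garside machinery (normal forms, minimal positive conjugators, and the prior theorem that intersections of parabolics are parabolic). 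Invoking the convexity result of \citep{CP} does not supply it: that is geodesic convexity of standard parabolic subgroups in the word metric over $S$, which neither transfers as stated to a conjugate $w^{-1}A_Yw$ nor yields closure under the Garside order $\po$. So the ``alternative'' assumes a statement of essentially the same depth as the lemma itself; you correctly flag this as the obstacle, but flagging it is not resolving it.

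Two smaller points. The assertion that ``each generator $s\in\supp(g)$ appears as (or divides) such a prefix'' is false as literally written: for $g=st$ with $m_{s,t}=3$ the only atom left-dividing $g$ is $s$, so $t\in\supp(g)$ is not a left-divisor of any prefix. The fix is the obvious induction --- if the prefix-closedness claim held, take an atom $s\po g$, deduce $s\in Q$, pass to the positive element $s^{-1}g\in Q$ of smaller length, and use $\supp(g)\subseteq\{s\}\cup\supp(s^{-1}g)$ --- but this needs to be said. Second, for the \emph{standard} case $Q=A_Y$ the argument is genuinely easy (one needs $A_Y\cap A_S^+=A_Y^+$ and the well-definedness of support, both classical), and it would strengthen the write-up to separate that case from the conjugated one, since it is only the latter that requires the missing theorem.
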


Our strategy to know if two elements are conjugate inside a parabolic subgroup will be based on taking their parabolic closure and verifying if this parabolic closure are conjugate inside the parabolic subgroup. However, there several ways of sending a set of generators to another set of generators by conjugacy. Due to that, standard parabolic subgroups of type $D_k$ will produce special cases that will need to be treated separately. The following three lemmas will help to deal with this cases.

\begin{lemma}\label{lemma:Dn}
Let $A_S$ be a spherical-type Artin group. Let $\Delta^e$ be a central power of the Garside element and $\alpha$ and $\beta$ be two elements of $A_S$. Then $\alpha$ and $\beta$ are conjugate if and only if $\alpha^{-1}\Delta^e$ and $\beta^{-1}\Delta^e$ are conjugate. 

In particular, using the same numbering as in \autoref{coxeter}, the elements $(s_1s_3s_4\cdots s_n)^{-1}\Delta $ and $(s_2s_3s_4\cdots s_n)^{-1}\Delta $ are not conjugate in $D_{n}$ when $n$ is even.

\end{lemma}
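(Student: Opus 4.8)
\textbf{Proof plan for \autoref{lemma:Dn}.}

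The first assertion is formal. The plan is to observe that, since $\Delta^e$ is central, the map $x \mapsto x^{-1}\Delta^e$ is a bijection of $A_S$ that intertwines the conjugation action with itself: for any $g$ one has $g^{-1}(\alpha^{-1}\Delta^e)g = (g^{-1}\alpha g)^{-1}\Delta^e$. Hence $g$ conjugates $\alpha$ to $\beta$ if and only if the same $g$ conjugates $\alpha^{-1}\Delta^e$ to $\beta^{-1}\Delta^e$, and the ``if and only if'' between the two conjugacy statements is immediate. (One could equally phrase this as: $\alpha \mapsto \alpha^{-1}\Delta^e$ descends to a well-defined bijection on conjugacy classes, with inverse $y \mapsto \Delta^e y^{-1} = y^{-1}\Delta^e$.)

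For the ``in particular'' clause, first I would use the central power: in $D_n$ with $n$ even, $\Delta$ itself is central, so $e=1$ works, and by the first part it suffices to show that $s_1 s_3 s_4\cdots s_n$ and $s_2 s_3 s_4\cdots s_n$ are \emph{not} conjugate in $D_n$. Both are positive elements, so by \autoref{lemma:support} their parabolic closures are $A_{\{s_1,s_3,\dots,s_n\}}$ and $A_{\{s_2,s_3,\dots,s_n\}}$ respectively. By \autoref{lemma:minimalparabolics}, if the two elements were conjugate then these two standard parabolic subgroups would be conjugate in $D_n$. The plan is then to run Paris' criterion (\autoref{Paris}, made explicit in \myref{Algoritmo:Paris}{Algorithm}) on $X=\{s_1,s_3,\dots,s_n\}$ and $X'=\{s_2,s_3,\dots,s_n\}$ inside the Coxeter graph of type $D_n$: the only twists available are conjugations by Garside elements of twistable components of $\Gamma_{Y\cup\{t\}}$, and when $n$ is even the ambient group $D_n$ is \emph{not} twistable, while the relevant proper standard parabolics that arise — which are of type $A_k$ and products thereof, and at most a single $D_m$ with $m<n$ — either are not twistable (when the $A_k$-component is too small, or $D_m$ with $m$ even) or, crucially, their twist fixes the relevant generator setwise in the wrong way; in every case the set $\{s_1\}$-versus-$\{s_2\}$ distinction (the two ``prongs'' of the $D_n$ diagram that attach to $s_3$) is never swapped, so $\mathcal V_X$ never contains a couple with first entry $X'$.

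The main obstacle is the bookkeeping in this last step: one must check that no sequence of admissible twists interchanges the two terminal vertices $s_1,s_2$ of the fork in $D_n$ with $n$ even. The key point to isolate is that the conjugation by $\Delta_{D_m}$ permutes $s_1\leftrightarrow s_2$ only when $m$ is \emph{odd} (as recorded in the bulleted list before the definition of twistable), and a twist by a Garside element of an $A_k$-type component can only move a vertex to another vertex within that component's support, which — because removing $s_1$ or $s_2$ from the fork leaves an $A_{n-1}$-chain not containing the other prong — cannot realize the swap either. I would organize this as: list the connected twistable standard parabolic subgroups $\Gamma_{Y'}$ that can occur during the algorithm starting from $X$, note each is of type $A_k$ ($k\ge 2$), and argue by tracking the image of the pair $\{s_1,s_2\}\cap Y$ that it is preserved throughout; hence $X$ and $X'$ are non-conjugate, completing the proof.
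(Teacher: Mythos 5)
Your proposal is correct and follows the paper's proof essentially verbatim: the first part via centrality of $\Delta^e$, then \autoref{lemma:support}, \autoref{lemma:minimalparabolics}, and Paris' criterion to see that $A_{S\setminus\{s_2\}}$ and $A_{S\setminus\{s_1\}}$ are not conjugate in $D_n$ for $n$ even. Your bookkeeping for the last step is more elaborate than necessary (the only vertex in $\mathrm{Adj}(X)$ is the missing prong, adding it gives the full $D_n$ graph, which is not twistable for $n$ even, so $\mathcal{V}_X=\{(X,1)\}$ and the algorithm halts immediately), but the conclusion is the same.
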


\begin{proof}
The first statement is quite straightforward as $\gamma^{-1}\alpha \gamma = \beta$ if and only if $\gamma^{-1}\alpha^{-1} \gamma = \beta^{-1}$. Since $\Delta^e$ is central, this happens if and only if $\gamma^{-1}\alpha^{-1} \Delta^e \gamma = \gamma^{-1}\alpha^{-1} \gamma \Delta^e = \beta^{-1} \Delta^e$.

For the second statement, notice that by \autoref{lemma:support} the parabolic closures of $s_1s_3s_4\cdots s_n $ and $s_2s_3s_4\cdots s_n $ are respectively $A_{S\setminus\{s_2\}}$ and $A_{S\setminus\{s_1\}}$. We know by \myref{Algoritmo:Paris}{Algorithm} that these two parabolic subgroups are not conjugate in $D_n$ if $n$ is even. Therefore, by \autoref{lemma:minimalparabolics} and the previous statement, $(s_1s_3s_4\cdots s_n)^{-1}\Delta $ and $(s_2s_3s_4\cdots s_n)^{-1}\Delta $ are not conjugate.
\end{proof}

\begin{lemma}\label{lemma:D4}

Let $A_S$ be an Artin group and let~$A_X$ and $A_T$ be standard parabolic subgroups so that $T\subset X\subset S$ such that $(T,T)$ is conjugate by ribbons in~$A_X$. Suppose that there is $s\in S\setminus X$ so that the connected component of~$\Gamma_{T\cup \{s\}}$ containing~$s$ is of type~$D_{2k+1}$, $k>1$, and $s=s_5$ with the numbering established for~$D_{2k+1}$ in \autoref{coxeter}. If~$A_X$ is conjugacy stable, then at least one of the following situations applies:

\begin{itemize}

\item There is $t\in X$ adjacent to $s_4$ such that the connected component of $A_{T\cup \{t\}}$ containing $t$ and $X'$ is of type $D_{2k'+1}$, $k'>1$.

\item There are $t_1\in X$ adjacent to $s_1$ and $t_2\in X$  adjacent to $s_2$, such that the connected component of $\Gamma_{T\cup \{t_1\}}$ containing $t_1$ is of type $D_{2k_1+1}$ and the connected component of $\Gamma_{T\cup \{t_2\}}$ containing $t_2$ is of type $D_{2k_2+1}$, $k_1,k_2 >1$.

\end{itemize}

\end{lemma}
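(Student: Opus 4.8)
The plan is to argue by contradiction: assume that $A_X$ is conjugacy stable but that neither of the two listed situations holds, and then produce two elements of $A_X$ that are conjugate in $A_S$ but not in $A_X$. The natural candidates are built from the two non-conjugate elements furnished by \autoref{lemma:Dn}. Let $D = T \cup \{s\}$, so $\Gamma_D$ has a connected component of type $D_{2k+1}$ with $s = s_5$; write $D'$ for the vertex set of that component, so $D' \subseteq T \cup \{s\}$ and $D' \setminus \{s\} \subseteq T \subseteq X$. Inside $A_{D'}$ (spherical type $D_{2k+1}$, $k>1$) the Garside element $\Delta_{D'}$ generates the centre, and conjugation by $\Delta_{D'}$ swaps the two "fork" generators $s_1, s_2$ and fixes the rest; in particular the two standard parabolic subgroups $A_{D'\setminus\{s_1\}}$ and $A_{D'\setminus\{s_2\}}$ of $A_{D'}$ are conjugate in $A_{D'}$ by the twist $\Delta_{D'}$. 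Pick a positive element $\alpha \in A_{T}$ (indeed in $A_{D'\setminus\{s\}} \subseteq A_X$) whose support is $D' \setminus \{s, s_2\}$, together with a companion $\beta$ whose support is $D' \setminus \{s, s_1\}$; concretely $\alpha = s_1 s_3 s_4 \cdots s_{2k+1}$ (omitting $s_5$ since $s=s_5\notin T$) and $\beta = s_2 s_3 s_4 \cdots s_{2k+1}$, adjusted to the labelling of $D'$. These lie in $A_X$, and since $\Delta_{D'}^{-1}\alpha\Delta_{D'} = \beta$ they are conjugate in $A_S$.

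The heart of the argument is then to show that $\alpha$ and $\beta$ are \emph{not} conjugate in $A_X$, using the failure of both bullet points. By \autoref{lemma:support} applied inside any spherical-type parabolic subgroup containing $\alpha$, and more globally by the hypothesis that parabolic closures behave well (Lemma \ref{lemma:minimalparabolics}), the parabolic closures of $\alpha$ and $\beta$ are the standard parabolic subgroups on $D'\setminus\{s,s_2\}$ and $D'\setminus\{s,s_1\}$ respectively. If $\alpha$ and $\beta$ were conjugate in $A_X$ by some $g$, then $g$ would conjugate one parabolic closure to the other inside $A_X$; by the hypothesis that $(T,T)$ — and hence the relevant pair of subsets of $T$ — is conjugate by ribbons in $A_X$, such a $g$ must be (up to left multiplication by an element of the parabolic) a ribbon in $A_X$. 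A ribbon that carries the $D$-type parabolic $A_{D'\setminus\{s,s_2\}}$ onto $A_{D'\setminus\{s,s_1\}}$ must, by \autoref{lemma:elementary_ribbons} and \autoref{lemma:permutation} (the $A$-or-$D$ cases), be realised by passing through a sequence of twists; tracking which vertex of $X$ is adjoined at each step, the only way to turn a "$D$ minus $s_2$" configuration into a "$D$ minus $s_1$" configuration via ribbons available in $A_X$ is exactly to have one of the two adjacency patterns in the bullet points — either a single vertex $t$ adjacent to $s_4$ completing a larger $D_{2k'+1}$, or a pair $t_1, t_2$ adjacent to $s_1$ and $s_2$ respectively, each completing a $D$-type component. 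Since we assumed neither pattern occurs, no such ribbon exists in $A_X$, so $\alpha \not\sim_{A_X} \beta$, contradicting conjugacy stability of $A_X$.

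I expect the main obstacle to be the bookkeeping in that last step: making precise the claim that "the only ribbons in $A_X$ sending $A_{D'\setminus\{s,s_2\}}$ to $A_{D'\setminus\{s,s_1\}}$ are those passing through one of the two named configurations." This requires a careful case analysis of which connected components of type $D$ (or $A$) can appear when one adjoins a single vertex of $X$ to $T$, using \autoref{lemma:elementary_ribbons} to see that only $A$- and $D$-type components move generators around at all, and that within a $D_m$ component only conjugation by the Garside element (when $m$ is odd) interchanges $s_1$ and $s_2$ — which forces the adjoined vertex to sit at the appropriate end of the fork. The rest (constructing $\alpha,\beta$, invoking Lemmas \ref{lemma:Dn}, \ref{lemma:minimalparabolics}, \ref{lemma:support}, and the ribbon property) is routine once that combinatorial dichotomy is established.
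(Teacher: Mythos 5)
Your overall strategy (exhibit two elements of $A_X$ conjugate in $A_S$ by the twist $\Delta$ of the $D_{2k+1}$ component, then use the ribbon decomposition of a hypothetical conjugator in $A_X$ to derive a contradiction with \autoref{lemma:Dn}) is the right one, but the choice of test elements creates a genuine gap. Your $\alpha$ and $\beta$ have supports $D'\setminus\{s,s_2\}$ and $D'\setminus\{s,s_1\}$, so their parabolic closures are two \emph{different, proper} standard parabolic subgroups of $A_T$. Consequently a conjugator $g\in A_X$ does not normalise $A_T$; it carries $A_{D'\setminus\{s,s_2\}}$ to $A_{D'\setminus\{s,s_1\}}$, and the only ribbon hypothesis available in the lemma is that $(T,T)$ is conjugate by ribbons in $A_X$. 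Your parenthetical ``and hence the relevant pair of subsets of $T$'' is not a valid inference: conjugacy by ribbons for one pair says nothing about subpairs, so you cannot write $g$ as (parabolic element)$\cdot$(ribbon) at all. Even if you could, you would be classifying $(D'\setminus\{s,s_2\},-)$--ribbons rather than $(T,-)$--ribbons, a strictly larger family (for instance one may adjoin $s_2$ itself, or vertices extending the type-$A$ chain $s_1\!-\!s_3\!-\!s_4$, and type-$A$ twists reverse the chain), and the claimed dichotomy ``the only way to carry one configuration to the other is via the two bullet-point patterns'' is unsubstantiated for that family.

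The paper sidesteps both problems by choosing $a:=(s_1s_3s_4)^{-1}\Delta_{X'}\Delta_{T\setminus X'}$ and $b:=(s_2s_3s_4)^{-1}\Delta_{X'}\Delta_{T\setminus X'}$, where $X'=\{s_1,s_2,s_3,s_4\}$ is the $D_4$ piece: these are \emph{positive} elements with $\mathrm{supp}(a)=\mathrm{supp}(b)=T$, so by \autoref{lemma:support} both have parabolic closure $A_T$, every conjugator normalises $A_T$, and the $(T,T)$--ribbon hypothesis applies verbatim. The elementary $(T,-)$--ribbons are then easy to enumerate, and the contradiction comes from \autoref{lemma:Dn} applied to $a$, $b$ and a third element $c:=(s_1s_3s_2)^{-1}\Delta_{X'}\Delta_{T\setminus X'}$ — your sketch also omits this third element, which arises because a $(T,T)$--ribbon through a $D_{2k'+1}$ component attached at $s_1$ swaps $s_2$ and $s_4$ rather than fixing $a$. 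To repair your argument, replace your $\alpha,\beta$ by elements of this ``inverse word times central $\Delta$'' form with full support $T$, and add the case of ribbons that send $a$ to $c$.
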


\begin{proof} Let $\Gamma_{Y}$ be the connected component of~$A_{T\cup \{s_{5}\}}$ containing~$s$, which is of type $D_{2k+1}$, for some $k>1$. Notice that $s=s_5$ can be adjacent to at most two connected components of~$\Gamma_T$, one being generated the subset $X'=\{s_1,s_2,s_3,s_4\}$ with the numbering established for~$D_{2k+1}$ in \autoref{coxeter}. The element
$\Delta_{Y}$
conjugates $a:=(s_1s_3s_4)^{-1}\Delta_{X'}\Delta_{T\setminus X'}= s_2s_1s_3s_4s_2s_1s_3s_4s_2 \Delta_{T\setminus X'}$ to $b:=(s_2s_3s_4)^{-1}\Delta_{X'} \Delta_{T\setminus X'}= s_1s_2s_3s_4s_1s_2s_3s_4s_1 \Delta_{T\setminus X'}$.  

Since~$A_X$ is conjugacy stable, there is an element $x\in A_X$ such that $x^{-1}ax=b$.
As~$a$ and~$b$ are positive and $\mathrm{supp}(a)=\mathrm{supp}(b)=T$, by \autoref{lemma:support} the parabolic closures of~$a$ and~$b$ are both~$A_{T}$. Then, by \autoref{lemma:minimalparabolics}, any element that conjugates~$a$ to~$b$ normalises~$A_{T}$.  In particular $x^{-1}A_{T}x=A_{T}$ and, since $(T,T)$ is conjugate by ribbons in~$A_{X}$, we can write $x$ as $x=x_1x_2$ where $x_1\in A_{T}$ and $x_2\in A_X$ is a $(T,T)$--ribbon. Equivalently, we can write $x=x_2x_3$, where $x_3=x_2^{-1}x_1x_2\in A_{T}$. 

 The non-trivial elementary $(T,-)$--ribbons in~$A_X$ are the ones written as $r_t:=\Delta_{Z\setminus\{t\}}^{-1}\Delta_{Z}$ or $\Delta_{Z\setminus\{t\}}\Delta_{Z}^{-1}$ (\autoref{remark:negativeribbons}), where $t\in X$, $\Gamma_{Z}$ is the connected component of~$\Gamma_{{T}\cup \{t\}}$ that contains~$t$ and $\Gamma_{Z\cup \{t\}}$ has spherical type. If $\Gamma_Z$ does not contain $X'$, since $\Delta_{T\setminus X'}$ is central in $A_{T\setminus X'}$, the conjugation by all other elementary $(T,-)$--ribbons  will commute with $X'$ and will fix~$a$.
If~$\Gamma_Z$ contains~$X'$, as~$X'$ has type~$D_4$, we have that $\Gamma_{Z\cup \{t\}}$ has  type~$D_m$. In the case of type $D_m$, $m$ even, the conjugation by $r_t$ centralises $A_{Z}$ and, in particular, it fixes $a$. In the case of type~$D_m$ with $m$ odd, $t$ is adjacent to either~$s_1$, or~$s_2$, or $s_4$. If for that case we suppose that none of the items of the lemma are satisfied, then all $t\in X$ are all adjacent to $s_1$ or all adjacent to $s_2$. So suppose without loss of generality that all $t\in X$ are adjacent to~$s_1$. Then, the conjugation by~$r_t$ normalises $A_{Z}$ and permutes the elements in~$Z$ (it switches~$s_2$ and~$s_4$). Hence, $r_t$ conjugates $a$ to $c:=(s_1s_3s_2)^{-1}\Delta_{X'}\Delta_{T\setminus X'}$ and it conjugates~$c$ to~$a$. It follows that, as~$x_2$ is a product of elementary ribbons, each one preserving~$T$, one has $x_2^{-1}ax_2\in\{a,c\}$. Then, since $x_3^{-1}(x_2^{-1}ax_2)x_3=b$ either~$a$ or $c$ are conjugate to~$b$ in~$A_{T}$ (and so in $A_{X'}$). However, we know by \autoref{lemma:Dn} that~$a$ is not conjugate to~$b$ in~$A_{X'}$, and that $c$ is not conjugate to~$b$ in~$A_{X'}$.  A contradiction. Hence some of the items of the statement must be satisfied.
\end{proof}

\begin{lemma}\label{lemma:DK+1}

Let $A_S$ be an Artin group and $A_X$ and $A_T$, $T\subset X\subset S$, be standard parabolic subgroups such that $(T,T)$ is conjugate by ribbons in $A_X$. Suppose that there is $s\in S\setminus X$ so that the connected component of  $\Gamma_{T\cup \{s\}}$ containing $s$ is of type $D_{2k+1}$, $k>2$, and $s=s_{2i+1}$, $i>2$, with the numbering established for~$D_{2k+1}$ in \autoref{coxeter}. If $A_X$ is conjugacy stable in $A_S$, then there is $t\in X$ adjacent to $s_{2i}$ such that the connected component of $\Gamma_{T\cup \{t\}}$ containing $t$ is of type $D_{2k'+1}$, $k'>2$.

\end{lemma}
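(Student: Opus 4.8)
The proof will be essentially parallel to that of \autoref{lemma:D4}, but adapted to the case where the relevant $D_4$-subgraph $\{s_1,s_2,s_3,s_4\}$ is replaced by the larger $D_{2i}$-subgraph $X' = \{s_1,\dots,s_{2i}\}$ sitting inside the component $\Gamma_Y$ of type $D_{2k+1}$ with $Y \supseteq T$. The key is again to exhibit a pair of positive elements $a,b \in A_T$ whose parabolic closures are both $A_T$, which are conjugated by $\Delta_Y$, but which are \emph{not} conjugate in $A_{X'}$ unless the asserted ribbon exists. Concretely I would set $a := (s_1 s_3 s_4 \cdots s_{2i})^{-1}\Delta_{X'}\Delta_{T\setminus X'}$ and $b := (s_2 s_3 s_4 \cdots s_{2i})^{-1}\Delta_{X'}\Delta_{T\setminus X'}$, noting $\supp(a) = \supp(b) = T$ so that by \autoref{lemma:support} their parabolic closures are both $A_T$, and that $\Delta_Y$ conjugates $a$ to $b$ because conjugation by $\Delta_Y$ acts on $D_{2k+1}$ by the graph reflection swapping $s_1 \leftrightarrow s_2$ and fixing everything else, so it swaps $\Delta_{X'}$-conjugates of $s_1\cdots$ with the corresponding $s_2\cdots$ words while fixing $\Delta_{T\setminus X'}$ (which it centralises, being supported away from $s_1,s_2$).

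\textbf{Main argument.} Assuming $A_X$ is conjugacy stable, there is $x \in A_X$ with $x^{-1}ax = b$. Since the parabolic closures of $a$ and $b$ are both $A_T$, \autoref{lemma:minimalparabolics} forces $x^{-1}A_T x = A_T$, and since $(T,T)$ is conjugate by ribbons in $A_X$ I may write $x = x_2 x_3$ with $x_2 \in A_X$ a $(T,T)$-ribbon and $x_3 \in A_T$. Then I analyse how conjugation by the elementary $(T,-)$-ribbons in $A_X$ acts on $a$, exactly as in \autoref{lemma:D4}: ribbons whose underlying component $\Gamma_Z$ does not contain $X'$ commute with $X'$ (since $\Delta_{T\setminus X'}$ is central in $A_{T\setminus X'}$) and fix $a$; if $\Gamma_Z \supseteq X'$ then, as $X'$ has type $D_{2i}$, the component $\Gamma_{Z\cup\{t\}}$ has some type $D_m$, and when $m$ is even conjugation by $r_t$ centralises $A_Z$ and fixes $a$. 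So the only way $x_2^{-1}ax_2$ can fail to be $a$ is if some elementary ribbon $r_t$ has $\Gamma_Z \supseteq X'$ of type $D_m$, $m$ odd, i.e.\ $t$ is adjacent to one of the extremal generators of $X'$; since $s = s_{2i+1}$ (so $X'$ is being extended ``at the $s_{2i}$ end''), the relevant adjacency is to $s_{2i}$, giving precisely a $t \in X$ adjacent to $s_{2i}$ with $\Gamma_Z$ of type $D_{2k'+1}$, $k' > 2$.

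\textbf{Closing the case.} If no such $t$ existed, every elementary $(T,-)$-ribbon would fix $a$, hence $x_2^{-1}ax_2 = a$, hence $b = x_3^{-1}ax_3$ with $x_3 \in A_T$, so $a$ and $b$ would be conjugate in $A_{X'}$. But $a$ and $b$ restricted to the $X'$-part are exactly $(s_1 s_3 \cdots s_{2i})^{-1}\Delta_{X'}$ and $(s_2 s_3 \cdots s_{2i})^{-1}\Delta_{X'}$ in the type-$D_{2i}$ group $A_{X'}$ (the central factor $\Delta_{T\setminus X'}$ plays no role in a conjugacy inside $A_{X'}$, and can be absorbed), which by \autoref{lemma:Dn} are \emph{not} conjugate in $D_{2i}$. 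This contradiction finishes the proof.

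\textbf{Expected obstacle.} The delicate point is the ribbon bookkeeping: one must check that \emph{every} elementary $(T,-)$-ribbon in $A_X$ other than those adjacent to $s_{2i}$ genuinely fixes $a$ — in particular that no ribbon adjacent to the ``$s_1, s_2, s_3$ end'' of $X'$ can produce a competing conjugate (unlike \autoref{lemma:D4}, here there is no second alternative in the statement, so such ribbons must be shown harmless, which is where the hypothesis $i > 2$ and $k > 2$ enters, guaranteeing $X'$ is long enough that extending it at the $s_1/s_2$ end still yields an even-rank $D_m$ or leaves $X'$ untouched). Making this exhaustive and correctly invoking \autoref{lemma:elementary_ribbons}/\autoref{remark_negative} for each possible type of $\Gamma_{Z\cup\{t\}}$ is the technical heart.
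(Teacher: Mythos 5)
Your proposal follows essentially the same route as the paper's proof: the same pair $a,b$, the same reduction via \autoref{lemma:support}, \autoref{lemma:minimalparabolics} and the $(T,T)$--ribbon decomposition $x=x_2x_3$, and the same final contradiction via \autoref{lemma:Dn}. The one point you gloss over is that when $\Gamma_Z\supseteq X'$ the component need not be of type $D_m$ --- for $i=3$ it can be of type $E_7$ (or even $E_8$) --- but in those cases the Garside element is central so the elementary ribbon still fixes $a$, which is exactly how the paper disposes of them.
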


\begin{proof} The proof is analogous to the proof of the previous lemma. Notice that $s=s_{2i+1}$ can be adjacent to at most two connected components of~$\Gamma_T$, one being generated the subset $X'=\{s_1,s_2,s_3,s_4,\cdots s_{2i}\}$ with the numbering established for~$D_{2k+1}$ in \autoref{coxeter}.
 Suppose also that there is no $t\in X$ adjacent to $s_{2i}$ such that the connected component of $\Gamma_{T\cup \{t\}}$ containing~$X'$ is of type $D_{2k'+1}$ for some $k'>2$.
The element $\Delta_{Y}$ conjugates $a:=(s_1s_3\cdots s_{2i})^{-1}\Delta_{X'} \Delta_{T\setminus X'} $ to $b:=(s_2s_3\cdots s_{2i})^{-1}\Delta _{X'} \Delta_{T\setminus X'}$. Suppose that $A_X$ is conjugacy stable. Then, there is $x\in A_X$ such that $x^{-1}ax=b$.

It is well-known by experts that $\Delta_{X'}= (s_2s_3\cdots s_{2i}s_1)^{2i-1}=(s_1s_3\cdots s_{2i}s_2)^{2i-1}$ (see \citealp{BS} and \citealp{Paris3}). So, by \autoref{lemma:support}, the parabolic closure of both~$a$ and~$b$ is~$A_{T}$. Then, by \autoref{lemma:minimalparabolics}, $x$ normalises~$A_{T}$. As in the proof of the previous lemma, $(T,T)$ being conjugate by ribbons in~$A_X$ means that we can write $x=x_2x_3$, where $x_2\in A_X$ is an $(T,T)$--ribbon and $x_3\in A_{T}$. The non-trivial elementary {$(T,-)$--ribbons} belonging to~$A_{X}$ are $r_t:=\Delta_{Z\setminus\{t\}}^{-1}\Delta_{Z}$ or $\Delta_{Z\setminus\{t\}}\Delta_{Z}^{-1}$ (\autoref{remark:negativeribbons}), where $t\in X$, $\Gamma_{Z}$ is the connected component of $\Gamma_{{T}\cup \{t\}}$ that contains~$t$ and $\Gamma_{Z\cup \{t\}}$ has spherical type. If~$\Gamma_Z$ does not contain $X'$, since $\Delta_{T\setminus X'}$ is central in~$A_{T\setminus X'}$, the conjugation by all other elementary $(T,-)$--ribbons will commute with~$X'$ and will fix~$a$. Otherwise, in~$\Gamma_{Z}$ has type~$E_7$ or~$D_{2k'+1}$ for some $k'>i$. In the~$E_7$ case, the conjugation by~$r_t$ centralises~$A_{Z}$ and  it fixes~$a$. We have supposed that the $D_{2k'+1}$ case does not happens, so $x_2^{-1}ax_2=a$. Hence $x_3^{-1}ax_3=b$ with $x_3\in A_{X'}$, which by \autoref{lemma:Dn} is a contradiction. 
 \end{proof}

The following lemma will help to deal with fact that an element in the stabilizer of a standard parabolic subgroup can permute its connected components.

\begin{lemma}\label{lemma:permutations}
Let $A_S$ be an Artin group and $A_X$, $X\subset S$, a standard parabolic subgroup of $A_S$ which is conjugacy stable. Suppose that for any $X_1,X_2\subseteq X$, the pair $(X_1,X_2)$  is conjugate by ribbons in $A_X$. Let $X_1,X_2\subset X$ be such that $g^{-1}{X_1}g={X_2}$, for some $g\in A_S$. Then there is $g'\in A_X$ such that $g^{-1}{Y}g=g'^{-1}{Y}g'$ for every connected component $\Gamma_{Y}$ of $\Gamma_{X_1}$.
\end{lemma}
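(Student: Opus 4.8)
The plan is to use conjugacy stability of $A_X$ to replace $g$ by an element $g'\in A_X$ inducing the same conjugation on a carefully chosen element of $A_{X_1}$, and then to invoke the ``conjugate by ribbons'' hypothesis in $A_X$: after discarding a factor lying in $A_{X_1}$, $g'$ is a ribbon, and a ribbon carries the standard generating set of each connected component of $\Gamma_{X_1}$ back to a standard generating set. First I record that conjugation by $g$ restricts to an isomorphism $A_{X_1}\to A_{X_2}$ taking $X_1$ bijectively onto $X_2$; since conjugation preserves commutation among generators, it maps each connected component $\Gamma_{Y_i}$ of $\Gamma_{X_1}$ onto a connected component of $\Gamma_{X_2}$, and I relabel the components of $X_2$ so that $Y_i':=g^{-1}Y_ig$ runs over them.

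For each $i$ I would choose a positive element $\beta_i\in A_{Y_i}^+$ with $\mathrm{supp}(\beta_i)=Y_i$, selected so that the $\beta_i$ are pairwise non-conjugate in $A_S$ (for instance by arranging their images in $H_1(A_S)$ to be pairwise distinct), and set $\beta:=\prod_i\beta_i\in A_{X_1}^+\subseteq A_X$; then $g^{-1}\beta g=\prod_i(g^{-1}\beta_ig)\in A_{X_2}\subseteq A_X$. Since $\beta$ and $g^{-1}\beta g$ are conjugate in $A_S$ and both lie in $A_X$, conjugacy stability gives $g'\in A_X$ with $g'^{-1}\beta g'=g^{-1}\beta g$. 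Because $\mathrm{supp}(\beta)=X_1$, the parabolic closure of $\beta$ is $A_{X_1}$: this is \autoref{lemma:support} when all components of $\Gamma_{X_1}$ are of spherical type, and in general follows from the existence of parabolic closures together with the fact that a parabolic subgroup contained in $A_{X_1}$ is a parabolic of $A_{X_1}$. By \autoref{lemma:minimalparabolics} the closure of $g'^{-1}\beta g'$ is $g'^{-1}A_{X_1}g'$ and that of $g^{-1}\beta g$ is $A_{X_2}$; as these coincide, $g'^{-1}A_{X_1}g'=A_{X_2}$.

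Next I would check that $g'$ realises the same bijection of components as $g$. Conjugation by $g'$ sends the irreducible direct factor $A_{Y_i}$ of $A_{X_1}$ to a parabolic subgroup of $A_{X_2}$ that is again an irreducible direct factor; using that a parabolic subgroup of a direct product of Artin groups splits as a product of parabolics of the factors and that an irreducible Artin group has no nontrivial direct-factor parabolic, this forces $g'^{-1}A_{Y_i}g'=A_{Y_{\pi(i)}'}$ for some permutation $\pi$. Comparing the $A_{Y_j'}$-coordinates in $g'^{-1}\beta g'=g^{-1}\beta g$ gives $g'^{-1}\beta_{\pi^{-1}(j)}g'=g^{-1}\beta_jg$, so $\beta_{\pi^{-1}(j)}$ and $\beta_j$ are conjugate in $A_S$; by the choice of the $\beta_i$ this forces $\pi=\mathrm{id}$, that is, $g'^{-1}A_{Y_i}g'=A_{Y_i'}$ for every $i$.

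Finally, since $X_1,X_2\subseteq X$, $g'\in A_X$ and $g'^{-1}A_{X_1}g'=A_{X_2}$, the hypothesis that $(X_1,X_2)$ is conjugate by ribbons in $A_X$ gives $g'=a\rho$ with $a\in A_{X_1}$ and $\rho\in\mathrm{Ribb}(X_1,X_2)\cap A_X$. As $a\in A_{X_1}=\prod_j A_{Y_j}$ normalises each $A_{Y_i}$, we get $\rho^{-1}A_{Y_i}\rho=g'^{-1}A_{Y_i}g'=A_{Y_i'}$. A ribbon conjugates standard subsets to standard subsets — by \autoref{lemma:elementary_ribbons} each elementary ribbon does so on every component, either fixing it, twisting it within itself, or shifting it to another standard subset — so $\rho^{-1}Y_i\rho$ is a standard subset generating $A_{Y_i'}$, hence equal to $Y_i'$. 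Thus $\rho\in A_X$ is the required element: $\rho^{-1}Y_i\rho=Y_i'=g^{-1}Y_ig$ for every connected component $\Gamma_{Y_i}$ of $\Gamma_{X_1}$. The step I expect to be most delicate is upgrading ``$g'$ conjugates $\beta$ the way $g$ does'' to ``$g'$ conjugates $A_{X_1}$ onto $A_{X_2}$ with the same component bijection'': this is what forces both the appeal to parabolic closures (via the support of \autoref{lemma:support}) and the choice of a test element with pairwise non-conjugate factors.
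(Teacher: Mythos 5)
Your overall strategy is the paper's: the same kind of test element (a positive element with one factor per connected component of $\Gamma_{X_1}$, the factors chosen pairwise non-conjugate -- the paper achieves this by giving them pairwise distinct letter counts and using homogeneity of the relations, which is the same device as your $H_1$ trick), conjugacy stability to produce $g'\in A_X$, \autoref{lemma:minimalparabolics} to get $g'^{-1}A_{X_1}g'=A_{X_2}$, and finally the ribbon factorisation. The genuine gap is in your middle paragraph, where you produce the permutation $\pi$: you assert that $g'^{-1}A_{Y_i}g'$ is a parabolic subgroup of $A_{X_2}$, that a parabolic subgroup of a direct product of Artin groups splits as a product of parabolics of the factors, and that an irreducible Artin group has no nontrivial direct-factor parabolic (implicitly you also need uniqueness of the decomposition into indecomposable factors). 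None of this is available under the hypotheses of the lemma: the statement that a parabolic subgroup of $A_S$ contained in $A_{X_2}$ is a parabolic subgroup \emph{of} $A_{X_2}$ is exactly a standardisation-type assertion (\autoref{def:standardizable}), which is not assumed here; and direct indecomposability of irreducible Artin groups, or the absence of proper parabolic direct factors, is neither proved in the paper nor known for general Artin groups. As written, that step fails.

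The gap is repairable, and the repair is exactly what the paper does and what your last paragraph already contains: invoke the ribbon hypothesis \emph{first}, writing $g'=a\rho$ with $a\in A_{X_1}$ and $\rho\in \mathrm{Ribb}(X_1,X_2)\cap A_X$. A ribbon conjugates letters to letters, so $\rho^{-1}Y_i\rho$ is automatically a connected component $Y'_{\pi(i)}$ of $\Gamma_{X_2}$, and since $a\in A_{X_1}$ preserves each $A_{Y_i}$, one gets $g'^{-1}A_{Y_i}g'=A_{Y'_{\pi(i)}}$ without any structural input; then your coordinate comparison of $g'^{-1}\beta g'=g^{-1}\beta g$ inside $\prod_j A_{Y'_j}$, together with the pairwise non-conjugacy of the $\beta_i$, forces $\pi=\mathrm{id}$, and $\rho$ is the required element. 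This is the paper's argument (phrased there via letter counts and homogeneity rather than conjugacy invariants). One further caveat, shared with the paper: the claim that the parabolic closure of $\beta$ is $A_{X_1}$ is taken from \autoref{lemma:support}, which is stated only for spherical-type ambient groups; your proposed general-case justification (``a parabolic subgroup contained in $A_{X_1}$ is a parabolic of $A_{X_1}$'') again appeals to a statement that is not among the lemma's hypotheses, so it should not be presented as automatic.
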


\begin{proof}

Denote by $\Gamma_{Y_1},\Gamma_{Y_2},\dots ,\Gamma_{Y_l}$, $Y_i\subset X_1$, the connected components of $\Gamma_{X_1}$. 
For each $i$, denote by $s_{i,j}$, $1\leq j \leq |Y_i|$ the elements in $Y_i$. Now define $y_i:=\left(\prod_{j=1}^{|Y_i|} s_{i,j}\right)^{k_i}$, where the $k_i$'s are chosen to satisfy that the number of letters in $\left(\prod_{j=1}^{|Y_i|} s_{i,j}\right)^{k_i}$ is different from the number of letters in $\left(\prod_{j=1}^{|Y_{i'}|} s_{i',j}\right)^{k_{i'}}$ for $i'\neq i$. 

%
%
%

\medskip


By \autoref{lemma:support}, we know that the element $a:=y_1y_2\dots y_l$ has parabolic closure $A_{X_1}$ and that $g^{-1}ag$ has parabolic closure $A_{X_2}$. Since $A_X$ is conjugacy stable in $A_S$, there must be $h\in A_X$ such that $h^{-1}ah=g^{-1}ag$. Then, by \autoref{lemma:minimalparabolics}, $h^{-1}A_{X_1}h=A_{X_2}$. The ribbon hypothesis tells us that $h=h_1h_2$ with $h_1\in A_{X_1}$ and $h_2\in \mathrm{Ribb}(X_1,X_2)\cap A_X$. So $h_2$ is an element in $A_X$ conjugating $X_1$ to $X_2$. Also, since $h_1\in A_{X_1}$, the conjugation $h_1$ preserves each $Y_i$ and one has that $h^{-1}A_{Y_i}h=h_2^{-1}A_{Y_i}h_2$ for $1\leq i \leq l$ --observe that conjugations by $g$ and $h_2$ send letters to letters, but $h$ and $h_1$ do not have to--.

\medskip

Suppose that  $g^{-1}{Y_i}g\neq h_2^{-1}{Y_i}h_2$ for some $1\leq i\leq l$. By the previous discussion, one should have that $hg^{-1}(y_1 y_2\dots y_l)gh^{-1}= y_1 y_2\dots y_l$, but let us see that this is impossible: The conjugation by $gh^{-1}$ permutes non trivially the components of $A_{X_1}$ and every $y_i$ is contained in a different component, so
 there is some $y_i$ such that $hg^{-1}y_i gh^{-1}= y_{i'}$, for $i'\neq i$. However, since the relations in an Artin group are homogeneous, two positive words representing two conjugate positive elements need to have the same number of letters. This is a contradiction and therefore we can set ${g'=h_2}$.
\end{proof}

\medskip

\begin{proof}[Proof of \autoref{th:main}]

\medskip
We are going to prove that a standard parabolic subgroup $A_X$ is conjugacy stable in $A_S$ if and only if the following conditions are fulfilled: 

\begin{enumerate}
\item For every $X'\subset X$ such that $g^{-1}{X'}g\subset {X}$, for some $g\in A_S$, there is $h\in A_X$ such that $g^{-1}{X''}g=h^{-1}{X''}h$ for every connected component $\Gamma_{X''}$ of $\Gamma_{X'}$.

\item Let $A_T$, $ T\subset X$, be a parabolic subgroup. Let  $s\in S\setminus X$ be such that the connected component of~$\Gamma_{T\cup \{s\}}$ containing $s$ is of type~$D_{2k+1}$, $k> 2$ with $s=s_{2i+1}$ following the numbering of \autoref{coxeter}, then there is $s'\in X$  adjacent to $s_{2i}$ such that the connected component of $\Gamma_{T\cup \{s'\}}$ containing $s'$ is of type $D_{2k'+1}$, $k'>2$. This condition is checked by \myref{Algoritmo:Dn}{Algorithm}.

\item Let $A_T$, $T\subset X$, be a parabolic subgroup. Let $s\in S\setminus X$ be such that the connected component of $\Gamma_{T\cup \{s\}}$ containing~$s$ is of type~$D_{2k+1}$, $k>1$ and $s=s_5$ with the numbering of \autoref{coxeter}. Then either there is $s'\in X$ adjacent to $s_4$ such that the connected component of $\Gamma_{T\cup \{s'\}}$ containing $s'$ is of type~$D_{2k'+1}$, $k>1$, or there are $t_1\in X$ adjacent to~$s_1$ and~$t_2\in X$ adjacent to~$s_2$  such that the connected component of $\Gamma_{T\cup \{t_1\}}$ containing $t_1$ is of type $D_{2k_1+1}$, $k_1>1$ and the connected component of $\Gamma_{T\cup \{t_2\}}$ containing $t_2$ is of type $D_{2k_2+1}$, $k_2>1$. This condition is checked by \myref{Algoritmo:D4}{Algorithm}.

\end{enumerate}
 
Once this is proven, we will be able to construct an algorithm to solve the conjugacy stability problem, explained in \myref{Algoritmo:main}{Algorithm}. This is a refinement of \myref{Algoritmo:Paris}{Algorithm} that considers permutations of components and the $D_{2k}$ exceptions.

\medskip

By \autoref{lemma:D4}, \autoref{lemma:DK+1} and \autoref{lemma:permutations}, we know that if some of the items is not satisfied, then $A_X$ cannot be conjugacy stable in $A_S$. So we need to prove that if all the items are satisfied, then $A_X$ is conjugacy stable. 

\bigskip

Let $\alpha,\beta\in A_X$ be such that there is $g\in A_S$ satisfying $g^{-1}\alpha g = \beta$.
Let $P_\alpha,P_\beta\subset A_X$ be the minimal parabolic subgroups containing $\alpha$ and $\beta$ respectively. Suppose that all of the items of the theorem are fulfilled.
Thanks to the standardisation condition, we know that $P_\alpha = g_1^{-1}A_{Y}g_1$ and $P_\beta = g_2^{-1}A_{Z}g_2$, with $g_1,g_2\in A_X$ and $Y,Z\subseteq X$.   Also, \autoref{lemma:minimalparabolics} implies that $P_\alpha=g^{-1}P_\beta g$. So, up to conjugacy by elements of $A_X$, we can suppose that $A_Y$ and $A_Z$ are the parabolic closures of $\alpha$ and $\beta$ and are conjugate by $g$. 

\medskip

$(Y,Z)$ being conjugate by ribbons in $A_S$ tells us that $g=a_1\cdot a_2$ where $a_1\in A_Y$ and $a_2$ is a $(Y,Z)$--ribbon in $A_S$.
Since the first item is satisfied and $a_1^{-1}Y a_1 = Z$, we know that there is $h\in A_X$ such that $g^{-1}{Y_i}g=h^{-1}{Y_i}h$ for every connected component $\Gamma_{Y_i}$ of $\Gamma_{Y}$. As $(Y,Z)$ is conjugate by ribbons in $A_X$, $h=b_1\cdot b_2$ where $b_1\in A_Y$ and $b_2$ is a $(Y,Z)$--ribbon in $A_S$. Also, note that $b_1$ cannot permute the connected components of $Y$: $b_1=y_1\cdots y_n$ where $y_i\in Y_i$, so every $y_i$ commutes with $Y_j$, $j\neq i$. Hence $a_2^{-1}{Y_i}a_2=b_2^{-1}{Y_i}b_2$ for every connected component $\Gamma_{Y_i}$ of $\Gamma_{Y}$.

\medskip
Notice that $a_2b_2^{-1}$ is a $(Y,Y)$--ribbon. Since the connected components of $Y$ are preserved, the conjugation by $a_2b_2^{-1}$ induces an isomorphism of the subgraph $\Gamma_{Y_i}$.
If~$A_{Y_i}$ has not spherical type or it has spherical type and it is non-twistable, we know by \autoref{lemma:permutation} that  $b_2a_2^{-1}sa_2b_2^{-1}=s$, for every $s\in Y_i$.  If~$A_{Y_i}$ is of type $A$ or $D_{2k+1}$, the only isomorphisms of graphs that we can have are the trivial one and a reflection switching the vertices corresponding to the two first generators. Then we have either $b_2a_2^{-1}sa_2b_2^{-1}=s$ or $\Delta_{Y_i}^{-1}b_2a_2^{-1}sa_2b_2^{-1}\Delta_{Y_i}=s$. This is also valid if $A_{Y_i}$ has type $E_6$ or $I_2(m)$ by \autoref{lemma:permutation}. 
If $A_{Y_i}$ is of type~$D_{2k}$, the only non trivial isomorphism of~$\Gamma_{Y_i}$ is a switch of the two first vertices. But, since $\Delta_{Y_i}$ is central, the conjugation by~$\Delta_{Y_i}$ cannot perform this isomorphism. Then, if the conjugation by $a_2b_2^{-1}$ switches the vertices, there must be a $t\in S$ adjacent to~$\Gamma_{Y_i}$ such that the connected component of $\Gamma_{Y\cup \{t\}}$ containing $Y_i$ is of type $D_{2k'+1}$ (notice that the conjugation by $\Delta_{2k'+1}$ does the desired switching). Then, by conditions 2 and 3 above, either there is $t\in X$ adjacent to $Y_i$ such that $\Delta_{Y'}^{-1}b_2a_2^{-1}sa_2b_2^{-1}\Delta_{Y'}=s$, where $\Gamma_{Y'}$ is the connected component of $\Gamma_{Y\cup\{t\}}$ containing $t$; or there are $t_1,t_2\in X$ adjacent to $Y_i$ such that $\Delta_{Y''}^{-1}\Delta_{Y'}^{-1}b_2a_2^{1}sa_2b_2^{-1}\Delta_{Y'}\Delta_{Y''}=s$, where $\Gamma_{Y'}$ is the connected component of $\Gamma_{Y\cup\{t_1\}}$ containing~$\Gamma_{Y'}$ is the connected component of $\Gamma_{Y\cup\{t_2\}}$ containing $\Gamma_{Y'}$.

\medskip
 The previous paragraph means that we can suppose that $a_2^{-1}s a_2 = b_2^{-1}sb_2$ up to conjugations by elements of the form $\Delta_{X'}$, $X'\subset X$. Then, up these conjugations, $g^{-1}\alpha g = a_1^{-1}b_2^{-1}\alpha b_2 a_1$. Since $b_2,a_1\in A_X$, we have proven that under the three items, $A_X$ is conjugacy stable in $A_S$.
\end{proof}

\section{FC-type Artin groups}\label{Section4}

If every standard parabolic subgroup of an Artin group $A_S$ that does not contain infinite relations has spherical type, then~$A_S$ is said to be of FC-type. The aim of this section is to prove Theorem~B, that is, to discuss whether we can apply the algorithm to solve the conjugacy stability problem in Artin groups of FC-type. 
From now on, suppose that~$A_S$ has FC-type. 

\smallskip

It is shown in \citep[Proposition~2]{Altobelli} that, if $s,t\in S$ are such that $m_{s,t}=\infty$, then~$A_S$ is isomorphic to the amalgamated free product of~$A_{S\setminus\{s\}}$ and $A_{S\setminus\{t\}}$ over $A_{S\setminus\{s,t\}}$, denoted by $A_{S\setminus\{s\}} *_{A_{S\setminus\{s,t\}}} A_{S\setminus\{t\}}$. Then, if we give an order to the $\infty$-labels of $A_S$, we will obtain a specific amalgamated product structure for $A_S$. The next two propositions about canonical forms in amalgamated free products can be found in \citep*[Section~4]{MKS} and sometimes will be used without explicit reference:

\begin{proposition}[Canonical form for amalgamated free products]
 Given the amalgamated free product $G=G_1 *_H G_2$ of the groups $G_1$ and $G_2$ over $H$, we respectively denote by $C_1$ and $C_2$ the transversals of $G_1/H$ and $G_2/H$ that contain $1$.  Then, every $x\in G$ can be uniquely expressed as a product $x=x_1x_2\cdots x_r h$, where $h\in H$, $x_i\in C_1\cup C_2$ is not trivial for $i=1,\dots, r$, and $x_i$ and $x_{i+1}$ belong to different transversals if $r>1$. This expression is called the \emph{amalgam normal form} of $x$, and we denote it by $\rho(x)$.  

\end{proposition}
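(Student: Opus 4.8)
The plan is to prove existence and uniqueness of the amalgam normal form separately; this is the classical normal form theorem for amalgamated free products, so a complete argument can also be quoted from \citep{MKS}. For \textbf{existence}, since $G$ is generated by $G_1\cup G_2$, I would start from an arbitrary factorisation $x=g_1\cdots g_m$ with each $g_j$ in $G_1$ or $G_2$ and argue by induction on $m$. Using the coset decomposition of each factor group, write $g_j=c_jh_j$ with $c_j$ in the transversal $C_i$ of the factor $G_i$ containing $g_j$ and $h_j\in H$; then push every $H$-part to the right, which is legitimate because $H$ lies inside both $G_1$ and $G_2$ inside $G$, so a product $h_jg_{j+1}$ again sits in a single factor and can be re-expanded. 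This yields $x=c_1\cdots c_m h$ with $c_j\in C_1\cup C_2$ and $h\in H$. To reach the alternating reduced shape I would then clean up: delete any $c_j=1$, and whenever two consecutive factors lie in the same transversal, multiply them inside the corresponding $G_i$, re-split the product as a transversal element times an element of $H$, and carry the $H$-part rightwards. Each such operation strictly decreases the number of factors, so the process terminates at a word $x_1\cdots x_r h$ of the required form.

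For \textbf{uniqueness} I would use the van der Waerden permutation trick. Let $\Omega$ be the set of all formal tuples $(x_1,\dots,x_r,h)$, $r\ge 0$, satisfying exactly the stated conditions. For $i=1,2$ I would define an action of $G_i$ on $\Omega$ by a ``multiply on the left, then renormalise'' rule: given $g\in G_i$ and $\omega\in\Omega$, form $g$ times the initial part of $\omega$ that lies in $G_i$ (namely $gx_1$ when $x_1\in C_i$, and otherwise $g$ alone, with a carried $H$-element), re-split it as $c\,k$ with $c\in C_i$ and $k\in H$, drop the leading slot if $c=1$, and pass $k$ on to the next position, ultimately into the trailing $h$. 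One then checks that this is a genuine $G_i$-action and that the two actions agree on $H$ (for $h_0\in H$ both send $(x_1,\dots,x_r,h)$ to $(x_1,\dots,x_r,h_0h)$); by the universal property of $G=G_1*_HG_2$ they combine into a single action of $G$ on $\Omega$. Finally, from the definition one verifies that, if $x_1\cdots x_r h$ is any amalgam normal form of $x$, then applying $x_1,\dots,x_r,h$ successively to the base tuple $(1_H)$ returns $(x_1,\dots,x_r,h)$; hence $x\cdot(1_H)=(x_1,\dots,x_r,h)$, and since the left-hand side depends only on $x$, the normal form is unique.

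The main obstacle is the bookkeeping in the second step: defining the $G_1$- and $G_2$-actions on $\Omega$ so that each is a genuine action and so that their restrictions to $H$ coincide. This needs a case analysis --- the renormalisation may ``cancel'', ``absorb'', or ``prepend'' a letter --- and is where the real content of the normal form theorem sits; existence and the passage from the $G$-action to uniqueness are routine.
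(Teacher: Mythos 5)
The paper does not actually prove this proposition: it is quoted directly from \citep[Section~4]{MKS} as classical background, so there is no in-paper argument to compare against. Your proposal is the standard textbook proof and is sound in outline: existence by rewriting an arbitrary factorisation through the coset decompositions $G_i=C_iH$ and pushing the $H$-parts rightwards, with a terminating clean-up step; uniqueness by van der Waerden's permutation trick, building compatible $G_1$- and $G_2$-actions on the set of formal normal forms and invoking the universal property of $G_1*_HG_2$. Two small points to tidy up. First, in the final step the order of application is reversed from how you wrote it: since $(gg')\cdot\omega=g\cdot(g'\cdot\omega)$, to compute $x\cdot(1_H)$ for $x=x_1\cdots x_rh$ you must act by $h$ first, then $x_r$, and so on up to $x_1$; the conclusion is the same but as written the bookkeeping is backwards. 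Second, the normal form (and hence the set $\Omega$) depends on a fixed choice of the transversals $C_1,C_2$ --- ``the transversal containing $1$'' is not unique --- so one should fix these once and for all at the outset; this is implicit in the statement but worth making explicit before defining the actions. With those caveats, the argument is the one in the cited literature and nothing essential is missing beyond the case analysis you already flag.
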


\begin{proposition}[Conjugacy in amalgamated free products]\label{lemma:cr}
Given the previous amalgamated free product $G=G_1 *_H G_2$, any element $g\in G$ is conjugate to an element $x$ with amalgam normal form $x_1x_2\cdots x_r h$, in which $x_1$ and $x_r$ belong to different transversals. We say that such an element $x$ is \emph{cyclically reduced}. Moreover, \begin{itemize}

\item if $x$ is conjugate to a element written $y=p_1p_2p_3\cdots p_k$, $k\geq 2$, where $p_i$,$p_{i+1}$ as well as $p_1$,$p_{k}$ belong to different transversals, then $x$ is obtained from $y$ by cyclically permuting $p_1p_2p_3\cdots p_k$ and then conjugating by an element of $H$; 

\item if $H=\{1\}$ and $x$ is conjugate to an element $y$ in one of the factors ($G_1$ or $G_2$), then $x$ and $y$ belong to the same factor and are conjugate in that factor.

\end{itemize}

\end{proposition}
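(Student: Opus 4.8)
The plan is to deduce this from the normal form theorem stated just above, following the classical argument in \citep*[Section~4]{MKS}. For $x\in G$ with amalgam normal form $x_1\cdots x_r h$, write $\ell(x)=r$ for its syllable length; by the normal form theorem $\ell(x)$ depends only on $x$.

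First I would establish that every $g$ has a cyclically reduced conjugate. Among all conjugates of $g$ choose one, $x$, with $\ell(x)$ minimal. If $\ell(x)\le 1$ it is cyclically reduced by convention. If $\ell(x)=r\ge 2$ and $x_1,x_r$ lie in the same transversal, conjugating by $x_1$ gives $x_1^{-1}xx_1=x_2\cdots x_r(hx_1)$; since $x_r$ and $hx_1$ lie in the same factor their product is a single element of that factor, so $\ell(x_1^{-1}xx_1)\le r-1$, contradicting minimality. Hence $x_1$ and $x_r$ lie in different transversals, i.e.\ $x$ is cyclically reduced.

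For the first bullet, let $x$ be cyclically reduced with $\ell(x)=r\ge 2$ and let $y=p_1\cdots p_k$, $k\ge 2$, be cyclically reduced with $c^{-1}xc=y$. I would induct on $\ell(c)$. If $\ell(c)=0$ then $c\in H$, and bringing $c^{-1}xc=(c^{-1}x_1)x_2\cdots x_r(hc)$ to normal form shows it is a word of length $r$ whose $j$-th syllable is the $j$-th syllable of $x$ shifted by an element of $H$; so $y$ is the trivial cyclic permutation of $x$ conjugated by $c\in H$. If $\ell(c)=m\ge 1$, write $c=c_1\cdots c_m h'$ and bring $c^{-1}xc$ to normal form, examining the two junctures, where $c^{-1}$ meets $x$ and where $x$ meets $c$. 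Since $x$ is cyclically reduced of length $\ge 2$, at most one of $x_1,x_r$ lies in the transversal containing $c_1$, and a short case analysis of the possible cancellations shows that the outermost syllable $c_1$ must cancel against the corresponding end of $x$; otherwise $\ell(c^{-1}xc)>r$, which is incompatible with $\ell(y)=k$ once one knows -- as also follows from this induction -- that conjugating a cyclically reduced element of length $\ge 2$ to a cyclically reduced element preserves the length. Carrying out that cancellation replaces $x$ by a cyclic permutation of itself conjugated by an element of $H$ and replaces $c$ by a conjugator of syllable length $m-1$, so the inductive hypothesis applies; iterating peels off all of $c$ and exhibits $y$ as a cyclic permutation of $x$ followed by an $H$-conjugation.

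For the second bullet, $H=\{1\}$ makes $G=G_1*G_2$ the ordinary free product; every element of $G_1$ or $G_2$ is already cyclically reduced with $\ell=1$, and conjugation by $H$ is trivial. If $x\in G_1$ is conjugate to $y\in G_1\cup G_2$, the first part forces $\ell(y)=1$, and the length-one instance of the same cancellation analysis -- now with $H$ trivial, so with no ``hidden'' identifications of coset representatives -- forces $y$ to lie in the same factor $G_1$ and to be conjugate to $x$ inside $G_1$; the statement with $G_2$ in place of $G_1$ is symmetric. The only genuine work in all of this is the cancellation bookkeeping in the inductive step -- deciding exactly when multiplication by a coset representative can shorten a cyclically reduced word of length $\ge 2$, and pinning down the outer syllable of the conjugator modulo $H$ -- which is precisely what \citep*[Section~4]{MKS} carries out, so here it is simply quoted rather than reproved.
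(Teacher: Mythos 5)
The paper does not prove this proposition: it is stated as a quoted fact from \citep*[Section~4]{MKS}, which is exactly the source you defer to for the cancellation bookkeeping in the inductive step. Your sketch (minimal-length conjugate to get cyclic reduction, then induction on the syllable length of the conjugator, with the free-product case as the $H=\{1\}$ specialisation) is the standard argument from that reference and is correct, so it takes essentially the same approach as the paper.
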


\noindent
The amalgamated free product structure of a standard parabolic subgroup $A_X$ of $A_S$ heavily relies on the structure of $A_S$. The next result is a consequence of \citep[Theorem~2]{Altobelli}.

\begin{lemma}[{\citealp[Corollary~1.12]{GodelleFC}}]\label{lema:amalgama}
Let $A_S\simeq A_{S\setminus\{s\}} *_{A_{S\setminus\{s,t\}}} A_{S\setminus\{t\}}$ be a $FC$-type Artin group. Let $X\subset S$. If $w\in A_X$, then the amalgam normal form of $w$ has its terms in $A_X$.
\end{lemma}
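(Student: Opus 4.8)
The plan is to establish the final statement of \autoref{lema:amalgama} — that for $w\in A_X$ the amalgam normal form of $w$ has all its terms in $A_X$ — by induction on the number of $\infty$-labels used to decompose $A_S$, exploiting the fact that $A_X$ inherits an amalgam decomposition compatible with that of $A_S$. First I would set up the base case: if $A_S$ has no $\infty$-relation, then $A_S$ is of spherical type, the amalgam is trivial, and there is nothing to prove. For the inductive step, write $A_S\simeq A_{S\setminus\{s\}} *_{A_{S\setminus\{s,t\}}} A_{S\setminus\{t\}}$ using one chosen $\infty$-pair $\{s,t\}$, and distinguish cases according to how $X$ meets $\{s,t\}$.

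The key case analysis is: (i) if $X$ contains neither $s$ nor $t$, then $A_X\subseteq A_{S\setminus\{s,t\}}=H$, so $w\in H$ and its amalgam normal form is just $w$ itself, trivially with its single term in $A_X$; (ii) if $X$ contains exactly one of $s,t$, say $s\notin X$ but $t\in X$ (or vice versa), then $A_X\subseteq A_{S\setminus\{s\}}=G_1$, so again $w$ lies entirely in one factor and its normal form is the single term $w\in A_X$; (iii) the substantive case is $s,t\in X$, where $A_X$ itself splits as $A_{X\setminus\{s\}}*_{A_{X\setminus\{s,t\}}} A_{X\setminus\{t\}}$ by \citep[Proposition~2]{Altobelli} applied inside $A_X$ (this uses that $A_X$ is also of FC-type, being a standard parabolic subgroup). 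In this last case I would argue that the amalgam decomposition of $A_X$ embeds into that of $A_S$: the factors $A_{X\setminus\{s\}}\subseteq A_{S\setminus\{s\}}$, $A_{X\setminus\{t\}}\subseteq A_{S\setminus\{t\}}$, and the amalgamated subgroup $A_{X\setminus\{s,t\}}\subseteq A_{S\setminus\{s,t\}}$. Taking the normal form of $w$ with respect to the decomposition of $A_X$ gives $w=w_1\cdots w_r h$ with $w_i\in A_{X\setminus\{s\}}$ or $A_{X\setminus\{t\}}$ alternating and $h\in A_{X\setminus\{s,t\}}$, and I must check this is (up to choosing compatible transversals) the normal form of $w$ with respect to the decomposition of $A_S$ — which follows because a syllable-reduced alternating word over the subfactors is automatically syllable-reduced over the ambient factors, since $A_{X\setminus\{s\}}\cap A_{S\setminus\{s,t\}}=A_{X\setminus\{s,t\}}$ (by convexity/intersection properties of standard parabolic subgroups, \citep{Vanderlek}) prevents a subfactor element outside $H$ from landing in the ambient $H$. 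Uniqueness of the amalgam normal form then forces the $A_S$-normal form of $w$ to coincide with the $A_X$-normal form, so all its terms lie in $A_X$.

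The main obstacle will be making precise the compatibility of transversals between the two amalgam decompositions — one must choose the transversals $C_1,C_2$ of $A_{S\setminus\{s\}}/A_{S\setminus\{s,t\}}$ and $A_{S\setminus\{t\}}/A_{S\setminus\{s,t\}}$ so that their restrictions to $A_{X\setminus\{s\}}$ and $A_{X\setminus\{t\}}$ are transversals for the smaller quotients — and then verify that the key identity $A_{X\setminus\{s\}}\cap A_{S\setminus\{s,t\}}=A_{X\setminus\{s,t\}}$ (and symmetrically with $t$) holds, which is exactly where the known intersection properties of standard parabolic subgroups enter. Once that identity is in hand, the rest is a routine bookkeeping argument about normal forms in amalgamated products, using \citep*[Section~4]{MKS}. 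I would present the argument by stating the compatibility of decompositions as a short preliminary observation, then deduce the lemma in the three cases above, with case (iii) carrying the content and the induction only being needed to peel off one $\infty$-pair at a time (since $X$ may involve several of them, one iterates the argument with the next chosen $\infty$-label inside $A_X$).
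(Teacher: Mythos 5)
The paper does not prove this lemma at all: it is quoted verbatim from the literature (Godelle, \emph{Parabolic subgroups of Artin groups of type FC}, Corollary~1.12, which the paper notes is a consequence of Altobelli's Theorem~2), so there is no internal proof to match your argument against. Your reconstruction is essentially the standard and correct one: split into the three cases according to how $X$ meets $\{s,t\}$, use Altobelli to see that $A_X$ inherits the splitting $A_{X\setminus\{s\}}*_{A_{X\setminus\{s,t\}}}A_{X\setminus\{t\}}$ when $s,t\in X$, and use Van der Lek's intersection property $A_U\cap A_V=A_{U\cap V}$ to get $A_{X\setminus\{s\}}\cap A_{S\setminus\{s,t\}}=A_{X\setminus\{s,t\}}$, which guarantees that a syllable-reduced word over the small factors stays syllable-reduced over the big ones; uniqueness of normal forms then finishes the argument. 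You are also right that the compatibility of transversals is the genuine crux: as stated, with an arbitrary transversal containing $1$ the lemma is literally false (replace the representative of $wH$ by $wh_0$ for $h_0\in H\setminus A_X$), so one must choose the transversal of $A_{S\setminus\{s\}}/A_{S\setminus\{s,t\}}$ so that cosets meeting $A_{X\setminus\{s\}}$ get representatives in $A_{X\setminus\{s\}}$ — which is possible precisely because of the intersection identity, since those cosets biject with $A_{X\setminus\{s\}}/A_{X\setminus\{s,t\}}$. Two small corrections: your case (ii) is not a ``single term'' situation — an element of one factor still decomposes as $x_1h$ and needs the same compatible-transversal argument, using $A_X\cap A_{S\setminus\{s,t\}}=A_{X\setminus\{t\}}$; and the induction on the number of $\infty$-labels is superfluous here, since the statement concerns the normal form with respect to the single fixed decomposition along $\{s,t\}$, so the three-case analysis alone suffices.
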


\subsection{Proof of Theorem B}

In \citep[Corollary~3.2]{Rose}, it is proven that if an element $\alpha$ of an FC-type Artin group is contained in a spherical-type parabolic subgroup, then there is a unique minimal (by inclusion) spherical-type parabolic subgroup $Q_\alpha$ containing $\alpha$. We call $Q_\alpha$ the \emph{spherical-type parabolic closure} of $\alpha$. Since all parabolic subgroups of a spherical-type Artin group have spherical type, we can use \autoref{remark} to easily adjust the proof of \autoref{th:main} and be able to apply \myref{Algoritmo:main}{Algorithm} to spherical-type parabolic subgroups.  

\medskip
\noindent
The following lemma will allow us to complete the proof of the first part of Theorem~B.


\begin{lemma}\label{lema:parabolicoenaesferico}
Let $A_S$ be an Artin group of FC type and $A_X$, $X\subset S$, a  standard parabolic subgroup of non-spherical type. If $\alpha\in A_X$ belongs to a spherical parabolic subgroup of $A_S$, then the spherical-type parabolic closure $Q_\alpha$ of~$\alpha$ is contained in~$A_X$. 
\end{lemma}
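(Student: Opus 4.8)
The plan is to exploit the amalgamated free product structure of $A_S$ together with Godelle's Lemma~\ref{lema:amalgama} on amalgam normal forms. Write $A_S \simeq A_{S\setminus\{s\}} *_{A_{S\setminus\{s,t\}}} A_{S\setminus\{t\}}$ using some $\infty$-labelled pair $s,t\in S$, and argue by induction on the number of $\infty$-labels of $A_S$ (equivalently, on the depth of the iterated amalgam decomposition). In the base case $A_S$ has spherical type, and then $A_X$ is also of spherical type; but by hypothesis $A_X$ is \emph{not} of spherical type, so the base case is vacuous and we may always assume at least one $\infty$-label exists. For the inductive step, fix such a pair $s,t$ and let $H = A_{S\setminus\{s,t\}}$, $G_1 = A_{S\setminus\{s\}}$, $G_2 = A_{S\setminus\{t\}}$.

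First I would observe that $\alpha \in A_X$ belongs to a spherical-type parabolic subgroup of $A_S$, hence $\alpha$ cannot be cyclically reduced of amalgam syllable length $\geq 2$: an element of syllable length $\geq 2$ that is cyclically reduced has infinite order contributions forcing it outside any spherical parabolic (more carefully: a spherical-type parabolic is conjugate into a spherical standard parabolic $A_U$ with $U$ containing no $\infty$-pair, and by Proposition~\ref{lemma:cr} a cyclically reduced element of syllable length $\geq 2$ is not conjugate into a single factor, in particular not into $A_{S\setminus\{s\}}$ or $A_{S\setminus\{t\}}$, hence not into $A_U$). Since $\alpha\in A_X$, by Lemma~\ref{lema:amalgama} the amalgam normal form of $\alpha$ has all its syllables in $A_X$; so the reduction of $\alpha$ to a cyclically reduced form stays inside $A_X$, and the conjugating element can be taken in $A_X$ as well (each cyclic permutation and each $H$-conjugation uses only syllables of $\alpha$, which lie in $A_X$). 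Thus, after conjugating by an element of $A_X$, we may assume $\alpha$ itself lies in one of the two factors, say $\alpha \in G_i = A_{S\setminus\{r\}}$ for the appropriate $r\in\{s,t\}$; simultaneously $\alpha\in A_{X\cap(S\setminus\{r\})}$. Now $A_{S\setminus\{r\}}$ is again an FC-type Artin group with strictly fewer $\infty$-labels, and $\alpha$ belongs to a spherical-type parabolic subgroup of it (the spherical parabolic of $A_S$ containing $\alpha$ can be pushed into $A_{S\setminus\{r\}}$ by the same reduction argument, or one invokes that $Q_\alpha$ computed in $A_S$ must actually be a parabolic of $A_{S\setminus\{r\}}$). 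If $X\cap(S\setminus\{r\})$ is still of non-spherical type we apply the inductive hypothesis to conclude $Q_\alpha^{A_{S\setminus\{r\}}} \subseteq A_{X\cap(S\setminus\{r\})}\subseteq A_X$; if it has spherical type then $A_{X\cap(S\setminus\{r\})}$ itself is a spherical parabolic containing $\alpha$ and contained in $A_X$, so $Q_\alpha \subseteq A_{X\cap(S\setminus\{r\})}\subseteq A_X$ directly.

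The remaining point is to reconcile the spherical-type parabolic closure computed in the subgroup $A_{S\setminus\{r\}}$ with the one computed in $A_S$: I would use the uniqueness statement from \citep[Corollary~3.2]{Rose} together with the fact (standard for FC-type, following from convexity of parabolics and the amalgam structure) that a parabolic subgroup of $A_{S\setminus\{r\}}$ is also a parabolic subgroup of $A_S$, and that $A_{S\setminus\{r\}}$ itself is a parabolic subgroup of $A_S$; hence any spherical-type parabolic of $A_S$ containing $\alpha$ that is minimal must already be contained in $A_{S\setminus\{r\}}$ once we know $\alpha\in A_{S\setminus\{r\}}$ lies in a spherical parabolic of that subgroup, so $Q_\alpha^{A_S} = Q_\alpha^{A_{S\setminus\{r\}}}$. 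The main obstacle I anticipate is precisely this last compatibility: showing that passing to the factor $A_{S\setminus\{r\}}$ does not shrink or enlarge the spherical-type parabolic closure — i.e. that $Q_\alpha$ is ``intrinsic'' and does not depend on which FC-type ambient group we compute it in. This should follow from \citep[Corollary~3.2]{Rose} applied inside $A_S$ combined with the observation that $Q_\alpha^{A_{S\setminus\{r\}}}$, being a spherical-type parabolic of $A_S$ containing $\alpha$, must contain $Q_\alpha^{A_S}$, while minimality in $A_S$ and containment $Q_\alpha^{A_S}\subseteq A_{S\setminus\{r\}}$ (from the cyclically-reduced-form argument, which confines $\alpha$ and any minimal spherical parabolic over it to a single factor) give the reverse inclusion.
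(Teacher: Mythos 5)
Your overall strategy (amalgam decomposition, normal forms via \autoref{lema:amalgama}, cyclic reduction inside $A_X$, induction on the number of $\infty$-labels) is the right one, and the first half of your argument --- that the cyclically reduced form $x$ of $\alpha$ has syllable length at most one and is reached by conjugating inside $A_X$ --- is sound. The problem is that you induct on the $\infty$-labels of the \emph{ambient} group $A_S$, which forces you to replace $A_S$ by a factor $A_{S\setminus\{r\}}$ and then compare the spherical-type parabolic closure computed in $A_{S\setminus\{r\}}$ with the one computed in $A_S$. You correctly flag this compatibility as the main obstacle, but the resolution you sketch is circular: the containment $Q_x^{A_S}\subseteq A_{S\setminus\{r\}}$ that you need --- both to know that $x$ lies in a spherical parabolic subgroup of $A_{S\setminus\{r\}}$ at all (so that the inductive hypothesis is even applicable) and to get the reverse inclusion $Q_x^{A_{S\setminus\{r\}}}\subseteq Q_x^{A_S}$ --- is precisely an instance of the lemma being proved, with $X=S\setminus\{r\}$ and the \emph{same} ambient group $A_S$. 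Your induction hypothesis only covers ambient groups with strictly fewer $\infty$-labels, so it does not reach this instance; and the appeal to ``the cyclically-reduced-form argument confining any minimal spherical parabolic over $\alpha$ to a single factor'' does not work as stated, since \autoref{lemma:cr} controls where $x$ and its conjugating elements sit, not where the conjugate $\beta^{-1}A_Y\beta$ of an entire standard parabolic subgroup sits.

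The paper avoids this entirely by inducting on the number of $\infty$-labels of the \emph{subgroup} $A_X$ while keeping the ambient group $A_S$ (and hence the meaning of $Q_\alpha$) fixed: it chooses the $\infty$-pair $s,t$ inside $X$ (possible because $A_X$ is of non-spherical FC type), writes $Q_\alpha=\beta^{-1}A_Y\beta$ with $A_Y$ a spherical standard parabolic, and uses the conjugacy description of \autoref{lemma:cr} to place the cyclically reduced form $x$ inside $A_{Y\cup(S\setminus\{s,t\})}$; Van der Lek's intersection theorem then puts $x$ in the standard parabolic $A_{X\cap(Y\cup(S\setminus\{s,t\}))}$ of $A_X$, which cannot contain both $s$ and $t$ and so has strictly fewer $\infty$-labels than $A_X$. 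The inductive hypothesis, stated for the fixed ambient group $A_S$, applies directly, and uniqueness of the spherical-type parabolic closure finishes the argument. If you want to keep your scheme, you would have to prove the factor-compatibility statement separately, which essentially amounts to redoing the paper's argument; switching the induction to the subgroup is the cleaner fix.
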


\begin{proof}

Choose an $\infty$-label $m_{s,t}$ in $A_X$ and take the decompositions $A\simeq A_{S\setminus\{s\}} *_{A_{S\setminus\{s,t\}}} A_{S\setminus\{t\}}$ and $A_X\simeq A_{X\setminus\{s\}} *_{A_{X\setminus\{s,t\}}} A_{X\setminus\{t\}}$. We know by \autoref{lema:amalgama} that the amalgam normal form of~$\alpha$ has their terms in $A_X$ so it is also an amalgam normal form with respect to the structure of~$A_X$. Then, we can obtain a cyclically reduced element~$x\in A_X$ from~$\alpha$ by conjugating by an element~$c$ of~$A_X$. Also, we can write $Q_\alpha = \beta^{-1}A_Y\beta$, where~$A_Y$ is a spherical type standard parabolic subgroup of~$A_S$. Then, $\beta \alpha \beta^{-1} \in A_Y$ and we can obtain a cyclically reduced element~$y\in A_Y$ from $\beta \alpha \beta^{-1} $ by conjugating by an element of~$A_Y$.  We will show our lemma by induction on the number of $\infty$-labels of~$A_X$.  

\medskip

Suppose that there is only one $\infty$-label $m_{s,t}$ in~$A_X$. We first prove that $x$ is contained in a spherical-type standard parabolic subgroup $A_{X'}$. In this case $A_{X\setminus\{s\}}$ and $A_{X\setminus\{t\}}$  have spherical type, so if $x$ is contained in any of them we are done. Suppose then that~$x$ is not contained in any of these two subgroups. As~$x$ and~$y$ are conjugate and cyclically reduced, by \autoref{lemma:cr} $x$~is obtained from~$y$ by conjugating by an element in $A_{Y \cup (S\setminus\{s,t\})}$. Then, $x\in A_{X'}:= A_{Y \cup (S\setminus\{s,t\})}$. Since~$A_Y$ has spherical type, $Y$~cannot simultaneously contain $s$ and $t$. By \citep{Vanderlek}, the intersection of standard parabolic subgroups is (the expected) standard parabolic subgroup, meaning that $A_X\cap A_{X'}=A_{X\cap X'}$. So $x$ is contained in $A_{X\cap X'}$, which has spherical type because it lies in $A_X$ and cannot contain simultaneously $s$ and $t$. Conjugating by~$c^{-1}$, we have that~$\alpha$ is in the spherical-type parabolic subgroup $c A_{X\cap X'} c^{-1}<A_X$, which must contain~$Q_\alpha$ because the spherical-type parabolic closure is unique. This finishes the proof of the base case of our induction.

\medskip

To prove the step case suppose that, if $\alpha$ is contained in a standard parabolic subgroup with less than $k$ $\infty$-labels, then $Q_\alpha$ is contained in that parabolic subgroup. Let $A_X$ have $k$ labels. If $x$ belongs to $A_{X\setminus\{s\}}$ or $A_{X\setminus\{t\}}$, then $x$ belongs to the standard parabolic subgroup containing less than $k$ $\infty$-labels. Otherwise, applying the same reasoning as in the initial case,  $x\in A_X \cap A_{Y \cup (S\setminus\{s,t\})}$, which also has less than $k$ $\infty$-labels. Thus, by hypothesis, the spherical-type parabolic closure $Q_x$ of $x$ is in $A_X$. 
Therefore, $\alpha$ is in the spherical-type parabolic subgroup $c Q_x c^{-1}<A_X$, which must contain $Q_\alpha$.
\end{proof}

In the particular case in which $A_S$ is a free product of spherical-type Artin groups, we can prove the existence of a parabolic closure, hence all the hypothesis of Theorem~A will be fulfilled.

\begin{lemma}\label{lemma:power}
Suppose that $A_S$ is an Artin group of FC-type such that  $A_S\simeq A_{X_1}*A_{X_1}*\dots * A_{X_k} $, where every $A_{X_i}$ is a spherical-type Artin group. Let $\alpha\in A_S$. Then, any minimal parabolic subgroup containing $\alpha^m$ for any $m\in \mathbb{Z}$ contains also $\alpha$.
\end{lemma}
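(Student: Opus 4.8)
Looking at Lemma \ref{lemma:power}, the plan is to exploit the free product structure together with Proposition \ref{lemma:cr} (the case $H=\{1\}$) and the existence of spherical-type parabolic closures in spherical-type Artin groups.

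\textbf{The approach.} Write $A_S = A_{X_1} * \cdots * A_{X_k}$. First I would reduce to the case where $\alpha$ is cyclically reduced with respect to this free product decomposition: if $\alpha = c^{-1}\alpha_0 c$ with $\alpha_0$ cyclically reduced, then $\alpha^m = c^{-1}\alpha_0^m c$, and by \autoref{lemma:minimalparabolics} a minimal parabolic subgroup containing $\alpha^m$ is $c^{-1}(\text{min.\ parab.\ of }\alpha_0^m)c$, so the statement for $\alpha_0$ gives the statement for $\alpha$. So assume $\alpha$ is cyclically reduced. Now split into two cases. \emph{Case 1: $\alpha$ lies in a single free factor $A_{X_i}$.} Then $\alpha^m \in A_{X_i}$ as well, and by \autoref{lemma:cr} (second bullet, with $H=\{1\}$) any parabolic subgroup of $A_S$ meeting $A_{X_i}$ nontrivially — in particular any minimal parabolic subgroup $Q$ containing $\alpha^m$ — is, after conjugation, forced to interact only with $A_{X_i}$; more precisely I would argue that a parabolic subgroup $P$ of $A_S$ containing an element of $A_{X_i}\setminus\{1\}$ must be conjugate into $A_{X_i}$, and then use that in the spherical-type group $A_{X_i}$ the parabolic closure of any element exists (\citealp{CGGW}) and is supported on the support of a cyclically-reduced conjugate, which is unchanged under taking powers by \autoref{lemma:support}-type reasoning; hence the minimal parabolic for $\alpha^m$ inside $A_{X_i}$ equals that for $\alpha$, and it contains $\alpha$.

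\textbf{Case 2: $\alpha$ is not in any free factor.} Then its amalgam/free-product normal form has length $\geq 2$, say $\alpha = p_1\cdots p_\ell$ with consecutive (and, by cyclic reducedness, also $p_\ell, p_1$) in different factors. Then $\alpha^m = p_1\cdots p_\ell p_1 \cdots p_\ell \cdots$ is already cyclically reduced of length $m\ell \geq 2$. A parabolic subgroup of a free product $G_1 * G_2$ is either conjugate into a factor or is infinite cyclic generated by (a conjugate of) a cyclically reduced element of length $\geq 2$? — this is the delicate point. The cleaner route: parabolic subgroups of $A_S$ are by definition conjugates of standard parabolic subgroups $A_Y$, $Y\subseteq S$; if $A_Y$ is not contained in a single $X_i$ then $A_Y$ itself is a nontrivial free product and so is again of the same form. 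I would show that if $g^{-1}A_Y g$ contains a cyclically reduced element of free-product length $\geq 2$, then $g$ can be taken in $H=\{1\}$ — i.e. $A_Y$ itself contains $\alpha^m$ — by the conjugacy description in \autoref{lemma:cr}, and then that $Y$ must meet at least two of the $X_i$'s in the letters appearing in $\alpha$. By induction on $k$ (peeling off one free factor and invoking \autoref{lema:amalgama} / \autoref{lema:parabolicoenaesferico} to keep normal forms inside $A_Y$), I would reduce to comparing $\alpha$ and $\alpha^m$ inside a parabolic subgroup that is a free product of fewer spherical factors, where the normal-form letters of $\alpha^m$ are exactly the cyclic repetitions of those of $\alpha$, so any standard parabolic subgroup containing $\alpha^m$ must contain each letter $p_j$ and hence $\alpha$.

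\textbf{Main obstacle.} The hard part is the structural fact that a parabolic subgroup of $A_S = A_{X_1}*\cdots*A_{X_k}$ containing a cyclically reduced element of free-product length $\geq 2$ must, after conjugation, be a \emph{standard} parabolic $A_Y$ actually containing that element with its letters — equivalently, controlling how conjugation can move a standard parabolic $A_Y$ so that a long cyclically reduced word lands inside it. One must rule out the element sitting "diagonally" across the amalgam. I expect this to be handled by repeatedly applying the second bullet of \autoref{lemma:cr} together with \autoref{lema:amalgama} (normal-form letters stay inside the parabolic) and an induction on the number of free factors, with the base case $k=1$ being the purely spherical-type statement that the support of a cyclically reduced element is preserved under taking powers, giving equality of parabolic closures of $\alpha$ and $\alpha^m$.
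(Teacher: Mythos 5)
There is a genuine gap. Your argument leaves its crux unproven: in Case 2 you explicitly defer the key structural step (``if $g^{-1}A_Yg$ contains a cyclically reduced element of length $\geq 2$, then $g$ can be taken trivial'') to an expectation that it follows from \autoref{lemma:cr} and an induction, but this is exactly the hard content of the lemma and it is never established. In Case 1 the auxiliary claim you lean on --- that a parabolic subgroup of $A_S$ containing a nontrivial element of $A_{X_i}$ must be conjugate into $A_{X_i}$ --- is false as stated ($A_S$ itself, or any standard parabolic $A_Y$ with $Y$ meeting two factors, is a counterexample), and even for a \emph{minimal} parabolic containing $\alpha^m$ one cannot conclude it is conjugate into $A_{X_i}$ without already knowing something like uniqueness of parabolic closures, which is what \autoref{Prop:clausura_free} is trying to prove from this lemma.

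The obstacle you flag is real but avoidable, and the paper's proof sidesteps it entirely by moving $\alpha$ rather than trying to control where $P$ sits. Let $P$ be a minimal parabolic containing $\alpha^m$ and write $\beta^{-1}P\beta=A_X$ standard; then $(\beta^{-1}\alpha\beta)^m=\beta^{-1}\alpha^m\beta\in A_X$, so by \autoref{lema:amalgama} the amalgam normal form of $(\beta^{-1}\alpha\beta)^m$ has all its terms in $A_X$. When the amalgam length of $\beta^{-1}\alpha\beta$ exceeds $1$, the terms of its normal form occur among the terms of the normal form of its $m$-th power, so $\beta^{-1}\alpha\beta\in A_X$ and hence $\alpha\in P$; the single-factor case is handled by the spherical-type result that parabolic closures are stable under taking powers. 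No classification of how a long cyclically reduced word can land in a conjugated standard parabolic is needed, and no induction on the number of factors. If you want to salvage your outline, the missing ingredient is precisely this ``conjugate $P$ to standard and carry $\alpha$ along'' step, replacing both your Case 1 claim and your ``main obstacle.''
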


\begin{proof}
If $\alpha$ in contained in a single factor $A_{X_i}$, this is proven in \citep[Theorem~8.2]{CGGW}. Suppose otherwise and let $P$ be a minimal parabolic subgroup containing $\alpha^m$. There is an element $\beta$ such that $\beta^{-1}P\beta = A_X$ is standard. Notice that $\beta^{-1}\alpha^m \beta = (\beta^{-1}\alpha \beta)^m$. This means that the amalgam normal form of $(\beta^{-1}\alpha \beta)^m$ can be written using only letters in $X$ (\autoref{lema:amalgama}). By hypothesis, the length of the amalgam form of $\beta^{-1}\alpha \beta$ is bigger than 1, hence all the letters in the amalgam normal form of $\beta^{-1}\alpha \beta$ are letters that appear in the amalgam normal form of $(\beta^{-1}\alpha \beta)^m$. Therefore $A_X$ contains $\beta^{-1}\alpha \beta$. Conjugating by $\beta^{-1}$, we have that~$P$ contains $\alpha$.
\end{proof}

\begin{proposition}\label{Prop:clausura_free}
If $A_S$ is an Artin group of FC-type such that  $A_S\simeq A_{X_1}*A_{X_1}*\dots * A_{X_k} $, where every $A_{X_i}$ is a spherical-type Artin group, then every element $\alpha$ has a parabolic closure~$P_\alpha$.

\end{proposition}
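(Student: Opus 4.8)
## Proof plan for Proposition \ref{Prop:clausura_free}

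The plan is to show that the family of parabolic subgroups containing a fixed element $\alpha$ is closed under (arbitrary) intersection, so that the intersection of all of them is the desired parabolic closure $P_\alpha$. Since $A_S$ is a free product of spherical-type Artin groups, the natural tool is the Bass--Serre tree $\mathcal{T}$ on which $A_S$ acts: $A_S = A_{X_1}*\cdots *A_{X_k}$ is the fundamental group of a graph of groups (a ``star'' with trivial edge groups and vertex groups the $A_{X_i}$), and each conjugate $g^{-1}A_{X_i}g$ is the stabilizer of a vertex of $\mathcal{T}$. First I would reduce to the case where $\alpha$ is not conjugate into any single factor: if it is, \autoref{lemma:power} (via its cited source \citep[Theorem~8.2]{CGGW}) together with the fact that parabolics of a spherical-type Artin group have parabolic closures gives the result directly after conjugating. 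So assume $\alpha$ acts hyperbolically on $\mathcal{T}$ (equivalently, by \autoref{lema:amalgama}, its amalgam normal form, with respect to any $\infty$-splitting, has length $>1$), and is therefore contained in no conjugate of any $A_{X_i}$.

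Next I would analyze which standard parabolic subgroups $A_X$ can contain a given conjugate $g^{-1}\alpha g$. Because $\alpha$ is hyperbolic on $\mathcal{T}$, if $\beta^{-1}\alpha\beta \in A_X$ then $A_X$ cannot be of spherical type, so $X$ must still carry an $\infty$-label; and, crucially, \autoref{lemma:power} shows that any \emph{minimal} such $A_X$ is determined by the letters occurring in the amalgam normal form of $\beta^{-1}\alpha\beta$ — that normal form must be writable using only generators from $X$, and conversely the set $X_0$ of generators actually appearing is itself the support of a standard parabolic containing the element. The key step is to verify that $A_{X_0}$ is genuinely the unique minimal standard parabolic containing $\beta^{-1}\alpha\beta$, i.e.\ that any other parabolic $Q$ containing it also contains $A_{X_0}$. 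For standard $Q = A_{X'}$ this follows because the amalgam normal form forces $X_0 \subseteq X'$ and then $A_{X_0}\subseteq A_{X'}$; for a general parabolic $Q$ one conjugates $Q$ to standard form and tracks the normal form carefully, using the uniqueness of the amalgam normal form and \autoref{lema:amalgama} to control where the letters land. This gives a ``combinatorial parabolic closure'' for $\beta^{-1}\alpha\beta$, and conjugating back by $\beta^{-1}$ and invoking \autoref{lemma:minimalparabolics} transports it to a well-defined minimal parabolic $P_\alpha$ containing $\alpha$; any parabolic containing $\alpha$ must contain it.

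The step I expect to be the main obstacle is the last sentence of the previous paragraph: controlling an \emph{arbitrary} (non-standard) parabolic $Q$ containing a conjugate of $\alpha$, and showing minimality is preserved, rather than just comparing standard parabolics. The difficulty is that conjugating $Q$ to a standard parabolic $A_{X'}$ changes the conjugating element, and one must argue that the support of the normal form of the element does not shrink under such a conjugation — this is exactly the content packaged into \autoref{lemma:power}, whose hypothesis (length of the amalgam form $>1$) is precisely what rules out the bad behaviour. A clean way to organize the whole argument is to induct on the number of $\infty$-labels exactly as in the proof of \autoref{lema:parabolicoenaesferico}: split $A_X$ along one $\infty$-edge $m_{s,t}=\infty$, use \autoref{lemma:cr} to cyclically reduce $\alpha$ within $A_X$, and either land in a factor with fewer $\infty$-labels (apply induction) or pin down the support directly; the base case (one $\infty$-label) mirrors that lemma's base case. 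I would also record explicitly, as the excerpt's abstract promises, that in this free-product situation $P_\alpha$ really is the \emph{unique} minimal parabolic — this is immediate once the support characterization above is established, since any parabolic containing $\alpha$ contains a conjugate of $A_{X_0}$ and hence contains $P_\alpha$.
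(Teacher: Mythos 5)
Your overall architecture (reduce to the hyperbolic case, then try to pin down a canonical minimal parabolic via the amalgam normal form) is reasonable, but the step you yourself flag as ``the main obstacle'' is a genuine gap, and the tool you propose for it does not do the job. The problem is this: even if you establish that $A_{X_0}$ is the unique minimal \emph{standard} parabolic containing $\beta^{-1}\alpha\beta$ (which does follow from \autoref{lema:amalgama} and Van der Lek's intersection result), you still get, for each choice of conjugator $\beta$ with $\beta^{-1}\alpha\beta$ standardised, a candidate closure $\beta A_{X_0}\beta^{-1}$, and you must show these all coincide. Equivalently, if $P_1$ and $P_2$ are two minimal parabolics containing $\alpha$, they are conjugate by an element $c$ with $c^{-1}\alpha c=\alpha$, and you must show $c$ normalises $P_1$. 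You claim this is ``exactly the content packaged into \autoref{lemma:power}'', but that lemma says nothing about conjugation: it says a minimal parabolic containing $\alpha^m$ contains $\alpha$. ``The support of the normal form does not shrink under conjugation'' is simply false as stated -- conjugation changes the amalgam normal form arbitrarily, and only cyclically reduced conjugates are controlled (by cyclic permutation, \autoref{lemma:cr}). So the uniqueness of the minimal parabolic, which is the whole content of the proposition, is not actually proved in your outline.

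The paper closes exactly this gap with an ingredient absent from your proposal: the structure of centralisers in free products. For a hyperbolic element $\alpha$ (cyclically reduced length $r\ge 2$), any $c$ centralising $\alpha$ is, by \citep[Corollary~4.1.6]{MKS}, a power of the same element $h$ of which $\alpha$ is a power; \autoref{lemma:power} is then invoked for its true purpose, namely to conclude $h\in P_1$, hence $c\in P_1$ and $P_2=c^{-1}P_1c=P_1$. (The elliptic case, which you also dispatch too quickly -- having a closure inside a spherical factor does not immediately give a closure in all of $A_S$ -- is handled in the paper by induction on the number of factors together with the second item of \autoref{lemma:cr} and \autoref{lemma:minimalparabolics}; and the case of two spherical-type minimal parabolics uses \citep[Theorem~3.1]{Rose}.) If you want to salvage your approach, you should replace the ``tracking the normal form under conjugation'' step by this centraliser argument; without it the proof is incomplete.
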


\begin{proof}
We will prove the proposition by induction on $k$. If $k=1$, $A_S$ has spherical type and the result is true by \citep[Proposition~7.2]{CGGW}. Now suppose that the result is true for~$k-1$ and consider the free product structure $A_{X_1} * B$ where $B= A_{X_2}*A_{X_3}*\dots * A_{X_k}$. Suppose there are two minimal parabolic subgroups $P_1=\beta^{-1}A_Y \beta$, $P_2=\gamma^{-1}A_Z\gamma$ containing~$\alpha$. By \citep[Theorem~3.1]{Rose}, if $P_1$ and $P_2$ have spherical type, then $\alpha$ is contained in $P_1\cap P_2$, so by minimality $P_1=P_2$. Suppose then that $P_1$ has  non-spherical type. Then, $A_Y$~is a minimal parabolic subgroup containing  $\alpha':=\beta \alpha \beta^{-1}$ and $A_Z$ is a minimal parabolic subgroup containing $\alpha'':=\gamma \alpha \gamma^{-1}$. 
Applying \myref{Algoritmo:Paris}{Algorithm} implies that if~$A_Y$ and~$A_Z$ are different, they cannot be conjugate. 

\medskip

Let $\alpha_1\alpha_2\alpha_3\cdots \alpha_r$ be the amalgam normal form of~$\alpha'$ with respect to $A_{X_1} * B$. We also know that~$\alpha'$ and~$\alpha''$ are conjugate and that all $\alpha_i$'s are contained in~$A_Y$ (\autoref{lema:amalgama}). If $r=1$, then by \autoref{lemma:cr} we have that~$\alpha'$ and~$\alpha''$ belong to the same factor~$F$ of the free product and are conjugate by an element~$f$ in that factor. By inductive hypothesis, $A_Y$ is the parabolic closure of~$\alpha'$ in~$F$ and~$A_Z$ is the parabolic closure of~$\alpha''$ in $F$, so by \autoref{lemma:minimalparabolics} $f$ has to conjugate~$A_Y$ to~$A_Z$, which is only possible if $A_Y=A_Z$. Since $\alpha''= \gamma \beta^{-1}\alpha'\beta \gamma^{-1}$, we can apply again \autoref{lemma:minimalparabolics} to obtain $\gamma\beta^{-1} A_Y \beta\gamma^{-1} = A_Y$, so $P_1=P_2$. If $r\geq 2$, then  $\alpha''$ is obtained from~$\alpha'$ by cyclically permuting the $\alpha_i$'s. This means that $\alpha',\alpha''$ belong $A_Y \cap A_Z$, which by \citep{Vanderlek} is a parabolic subgroup contained in~$A_Y$ and~$A_Z$. As~$A_Y$ and~$A_Z$ are minimal, we have that $A_Y=A_Y \cap A_Z=A_Z$. It remains to show that this implies $P_1=P_2$. Notice that $P_2$ can be obtained from $P_1$ by using conjugation by an element that centralizes $\alpha$, namely $c:=\beta g \gamma^{-1}$, where $g$ is the element that conjugates $\alpha'$ to $\alpha''$. Now, by \citep[Corollary~4.1.6]{MKS}, either $c$ and $\alpha$ are in the same factor (this would be the case $r=1$) or~$\alpha$ and $c$ are a power of the same element $h$. By \autoref{lemma:power}, $h\in P_1$, hence $P_2=c^{-1}P_1c=P_1$.
\end{proof}

\begin{proof}[Proof of Theorem~B]
Thanks to \citep[Theorem~3.2]{GodelleFC} and \citep[Proposition~4.3]{Godelle2}, we know that $A$ is standardisable and has the ribbon property. If $A$ has a free product structure, then every element has a parabolic closure (\autoref{Prop:clausura_free}) and we can apply \myref{Algoritmo:main}{Algorithm}. Now suppose that $A$ is any FC-type Artin group and that $A_X$ is standard parabolic subgroup of $A$. We need to prove that there is an algorithm that takes as input~$A$ and~$A_X$ and  decides whether for every two elements $x,y\in A_X$, with $x,y$ contained in (possibly different) spherical-type parabolic subgroups, and such that $g^{-1}xg=y$ with $g\in A$, there is $g'\in H$ such that $g'^{-1}xg'=y$. \autoref{lema:parabolicoenaesferico} proves that the spherical-type parabolic closures~$Q_x$ and~$Q_y$, are contained in~$A_X$. This last condition and the existence of a spherical-type parabolic closure suffice to reproduce the proof of \autoref{th:main} --just replacing parabolic closures by spherical-type parabolic closures-- and show that running \myref{Algoritmo:main}{Algorithm} will do the job --notice that the only distinct irreducible standard parabolic subgroups that can be conjugate are the spherical-type ones--.

\end{proof}

\bigskip

\noindent{\textbf{\Large{Acknowledgments}}}

The idea of writing this paper came to me while doing a collaboration with Alexandre Martin, to whom I am very grateful for the year I spent in Edinburgh working under his supervision. 
Thanks to Yago Antolín for useful discussions about basics on amalgamated free products. Thanks to Juan Gonz\'alez-Meneses for reading this paper, his suggestions and numerous helpful conversations. I also very much appreciate the comments and remarks made by the referee of this article.

\medskip

\begin{algorithm}[ht]
\SetKwInOut{Input}{Input}\SetKwInOut{Output}{Output}

\BlankLine

\Input{The Coxeter graph $\Gamma_S$ of an Artin group $A_S$ and three subgraphs $\Gamma_X\subset \Gamma_S$, $\Gamma_{Y'}\subset \Gamma_Y\subset \Gamma_X$ such that $A_X$ and $A_S$ satisfies the hypotheses of \autoref{th:main} and $\Gamma_Y'$ is a connected component of $\Gamma_{Y}$ of type $D_{2k}$.}
\Output{$1$ (if we know that $A_X$ is not conjugacy stable) or $0$.}
\BlankLine
\BlankLine

Label the elements $s_1,s_2,\dots,s_{2k}$ of $Y$ as in \autoref{coxeter}.

\For{$t\in \mathrm{Adj}(\{s_{2k}\})\cap (S\setminus X)$}
{

\If{the connected component of $\Gamma_{Y\cup\{t\}}$ containing $Y'$ (and $t$) is of type $D_{2m+1}$, for some $m$}
{ 

\For{$t'\in \mathrm{Adj}(\{s_{2k}\})\cap X$}{
\If{the connected component of $\Gamma_{Y\cup\{t'\}}$ containing $Y'$ (and $t'$) is of type $D_{2m'+1}$, for some $m'$}{ 

\Return{$0$};}

}
\Return{$1$};

}

}

\Return{$0$}

\caption{Algorithm to check the $D_{2k}$, $k>2$, exceptions described in the proof of \autoref{th:main}}
\label{Algoritmo:Dn}
\end{algorithm}

\begin{algorithm}[H]
\SetKwInOut{Input}{Input}\SetKwInOut{Output}{Output}

\BlankLine

\Input{The Coxeter graph $\Gamma_S$ of an Artin group $A_S$ and two subgraphs $\Gamma_X\subset \Gamma_S$, $\Gamma_{Y'}\subset\Gamma_Y\subset \Gamma_X$ such that $A_X$ and $A_S$ satisfy the hypotheses of \autoref{th:main} and $\Gamma_{Y'}$ is a connected component of $\Gamma_Y$ of type $D_{4}$.}
\Output{$1$ (if we know that $A_X$ is not conjugacy stable) or $0$.}
\BlankLine
\BlankLine

Label the elements $s_1,s_2,s_3,s_{4}$ of $Y$ as in \autoref{coxeter}.

$Z=\{s_1,s_2,s_3\}$;

\For{$s\in Z$}
{

\For{$t\in \mathrm{Adj}(\{s\})\cap (S\setminus X)$}
{
$p=0$; $q=0$;

\If{the connected component of $\Gamma_{Y\cup\{t\}}$ containing $Y'$ (and $t$) is of type $D_{2m}$, for some $m$}
{ $p=1$; $q=1$;

\For{$t'\in \mathrm{Adj}(\{s\})\cap X$}{

\If{the connected component of $\Gamma_{Y\cup\{t'\}}$ containing $Y'$ (and $t'$) is of type $D_{2m'+1}$, for some $m'$}{ 

$p=0;$ \, break loop;}

}

\If{$p=1$}{
\For{$t_1\in \mathrm{Adj}(Z\setminus \{s\})\cap  X$}{

\If{the connected component of $\Gamma_{Y\cup\{t_1\}}$ containing $Y'$ (and $t_1$) is of type $D_{2m_1+1}$, for some $m_1$}{

\For{$t_2\in \mathrm{Adj}(Z\setminus \{s,t_1\})\cap  X$}{

\If{the connected component of $\Gamma_{Y\cup\{t_2\}}$ containing $Y'$ (and $t_2$) is of type $D_{2m_2+1}$, for some $m_1$}{

$p=0$; \, break loop;

}}

}

\If{$p=0$}{ break loop;}

}}}

\If{$p=1$}
{\Return{$1$};}

\If{$q=1$}
{break loop;}

}
}

\Return{$0$}

\caption{\small
 Algorithm to check the $D_{4}$ exceptions described in the proof of \autoref{th:main}}
\label{Algoritmo:D4}
\end{algorithm}

\newpage

\begin{algorithm}[H]
\SetKwInOut{Input}{Input}\SetKwInOut{Output}{Output}

\BlankLine

\Input{The Coxeter graph $\Gamma_S$ of an Artin group $A_S$ and a $\Gamma_X\subset A_S$ such that $A_X$ and $A_S$ satisfy the hypotheses of \autoref{th:main}.}
\Output{``$A_X$ is conjugacy stable" or ``$A_X$ is not conjugacy stable''.}
\BlankLine
\BlankLine

\For{$(X_1,X_2)\subset (X,X)$ such that $|X_1|=|X_2|$}{

\If{$\Gamma_{X_1}$ is of type $D_{2k}$}
{

\If{$k>2$}{run \autoref{Algoritmo:Dn};
 
\If{ \autoref{Algoritmo:Dn} returns $1$}
{\Return``$A_X$ is not conjugacy stable";}
}

\If{$k=2$}{run \autoref{Algoritmo:D4};
 
\If{ \autoref{Algoritmo:D4} returns $1$}
{\Return``$A_X$ is not conjugacy stable";}
}

}

$\Gamma_{X'_1}, \Gamma_{X'_2}, \dots,\Gamma_{X'_m}:=$  components of $\Gamma_{X_1}$;

$C:=\{(X'_1,X'_2,\dots,X'_m)\}$;

\If{$X_1=X_2$}{ $D:=\{(X'_1,X'_2,\dots,X'_m)\}$; }

\Else{
$D:=\{\emptyset\}$;}

\For{$(Y_1,Y_2,\dots,Y_m)\in C$}{

$Y:= Y_1 \cup Y_2 \cup \cdots \cup Y_m$;

\For{$t\in X\cap\mathrm{Adj}(Y)$}{
\smallskip

\If{the connected component $\Gamma_{Y'}$ of $\Gamma_{Y\cup \{t\}}$ containing $t$ is twistable}{

\smallskip

$Z=\Delta_{Y'}^{-1}Y\Delta_{Y'} $;

$T=(\Delta_{Y'}^{-1}Y_1\Delta_{Y'},\Delta_{Y'}^{-1}Y_2\Delta_{Y'},\dots,\Delta_{Y'}^{-1}Y_m\Delta_{Y'}) $;

\smallskip

\If{$T\notin C$}{
\smallskip

$C =C \cup \{T\}$;

\If{$Z=X_2$ and $T\not\in D$}{

$D=D\cup \{T\}$;

}}}}

}

\For{$(Y_1,Y_2,\dots,Y_m)\in C$}{

$Y:= Y_1 \cup Y_2 \cup \cdots \cup Y_m$;

\For{$t\in \mathrm{Adj}(Y)$}{
\smallskip

\If{the connected component $\Gamma_{Y'}$ of $\Gamma_{Y\cup \{t\}}$ containing $t$ is twistable}{

\smallskip

$Z=\Delta_{Y'}^{-1}Y\Delta_{Y'} $

$T=(\Delta_{Y'}^{-1}Y_1\Delta_{Y'},\Delta_{Y'}^{-1}Y_2\Delta_{Y'},\dots,\Delta_{Y'}^{-1}Y_m\Delta_{Y'}) $

\smallskip

\If{$T\notin C$}{
\smallskip

$C =C \cup \{T\}$;

\If{$Z=X_2$ and $T\not\in D$}{

\Return{``$A_X$ is not conjugacy stable'';}

}}}}}

}

 \Return{``$A_X$ is conjugacy stable'';}
\caption{Algorithm that tell us if a parabolic subgroup is conjugacy stable or not.}
\label{Algoritmo:main}
\end{algorithm}

\medskip
\bibliography{Bib_CS}

\bigskip\bigskip{\footnotesize%

\noindent
\textit{\textbf{María Cumplido} \\ 
Instituto de Matemáticas de la Universidad de Sevilla (IMUS)\\
Departamento de Álgebra, Universidad de Sevilla, Spain.} \par
 \textit{E-mail address:} \texttt{\href{mailto:cumplido@us.es}{cumplido@us.es}}
 }

\end{document}